\documentclass{amsart}
\usepackage{amsmath}
\usepackage{amsthm}
\usepackage{graphicx}
\usepackage{amssymb}
\usepackage{epstopdf}
\usepackage{nicefrac}
\usepackage{mathrsfs}
\usepackage{mathrsfs}
\usepackage{hyperref}
\usepackage{enumitem} 
\usepackage{tikz}
\usepackage{caption}
\usepackage{subcaption}
\theoremstyle{plain}

\newtheorem{theo}{Theorem}[section]
\newtheorem{lm}[theo]{Lemma}

\newtheorem{cor}[theo]{Corollary}

\DeclareMathOperator{\dist}{dist} 
 
\title[integrability of inner functions]{Inner Functions, Möbius Distortion and Angular Derivatives}
\author{Konstantinos Bampouras}
\address{Department of Mathematical Sciences, Norwegian University of Sciences and Technology (NTNU), NO-7491 Trondheim, Norway}
\email{konstantinos.bampouras@ntnu.no}
\author{Artur Nicolau}
\address{Departament de Matem\`atiques, Universitat Aut\`onoma de Barcelona, and Centre de Recerca Matem\`atica, 08193, Barcelona, Spain}
\email{artur.nicolau@uab.cat}
\date{\today}

\usepackage{cite}
\begin{document}
	\begin{abstract}
		We prove that an inner function has finite $\mathcal{L} (p)$-entropy if and only if its accumulated Möbius distortion is in $L^p$, $0<p<\infty$. We also study the support of the positive singular measures such that their corresponding singular inner functions have finite $\mathcal{L} (p)$-entropy.  
	\end{abstract}
	\keywords{ Inner Functions, Angular derivative, Entropy, Beurling-Carleson sets.}
	
	\subjclass[2020]{30J05, 30J15, 30H15, 30C80}
	
	\thanks{ 
		The second author is supported in part by the Generalitat de Catalunya (grant 2021 SGR 00071), the Spanish Ministerio de Ciencia e Innovaci\'on (project PID2021-123151NB-I00) and the Spanish Research Agency through the Mar\'ia de Maeztu Program (CEX2020-001084-M)}

	\maketitle

	\section{Introduction}
	
	Let $\mathbb{D}$ be the open unit disc in the complex plane and let $d_h (z, w)$ denote the  hyperbolic distance between the points $z, w \in \mathbb{D}$ given by  
	$$
	d_h (z,w) = \inf_\gamma \int_\gamma \frac{2 |d \zeta |}{1-|\zeta|^2} =   \log \frac{1+\rho(z,w)}{1- \rho (z,w)},
	$$
	where the infimum is taken over all curves $\gamma$ contained in $\mathbb{D}$ joining $z$ and $w$ and $\rho(z,w)= |z-w|/|1- \overline{w}z|$, $z, w \in \mathbb{D}$. The Schwarz lemma says that any analytic self-mapping $f$ of the unit disc is a contraction in the hyperbolic metric, that is, $d_h (f(z), f(w) ) \leq d_h (z,w)$ for any $z, w \in \mathbb{D}$, or equivalently, the hyperbolic derivative of $f$, denoted by $D_hf $, satisfies
	\begin{equation}
		\label{HYPDER}
		D_h f (z) = \frac{(1- |z|^2) |f'(z)|}{1- |f(z)|^2} \leq 1, \quad z \in \mathbb{D} . 
	\end{equation}
	Moreover, equality at a single point implies equality at every point in the unit disc and that $f$ is an automorphism of $\mathbb{D}$. 
	
	An analytic self-mapping $f$ of the unit disc is said to have a {\em finite
		angular derivative} (in the sense of Carath\'eodory) at a point $\xi \in \mathbb{T}$ if $\lim_{r \to 1} f(r\xi )$ exists and belongs to the unit circle and $ \lim_{r \to 1} f' (r \xi) $ exists finitely. In this case, we write $|f' (\xi)| = \lim_{r \to 1} |f' (r \xi)|$. It is well known that $f$ has a finite angular derivative at a point $\xi \in \mathbb{T}$ if and only if
	\begin{equation}
		\label{ANGDER}
		\liminf_{z \to \xi} \frac{1- |f(z)|}{1- |z|} < \infty. 
	\end{equation}
	If $f$ does not have a finite angular derivative at $\xi$, it is customary to set $|f'(\xi)| := \infty$. With these notations it is well known that
	$$
	|f'(\xi)| = \lim \frac{1- |f(z)|}{1-|z|}, \quad \xi \in \mathbb{T} ,  
	$$
	where the limit is taken as $z \in \mathbb{D}$ tends non-tangentially to $\xi$. See for instance \cite{garnett} or Chapter IV of \cite{shapiro}. 
	
	Let $m$ be the normalized Lebesgue measure on the unit circle $ \mathbb{T}$. Inner functions are analytic self-mappings $f$ of the unit disc such that their radial limits $\lim_{r \to 1} f(r \xi)$ have modulus one at $m$-almost every point  $\xi \in \mathbb{T}$. Fix $0 <  p < \infty$. An inner function $f$ has finite $\mathcal{L} (p)$-entropy if its angular derivative $|f'(\xi)|$ is finite at $m$-almost every point $\xi \in \mathbb{T}$ and $\log |f'(\xi)| \in L^p (\mathbb{T})$. 
	Note that when $p=1$ these are the inner functions $f$ with finite entropy, or equivalently, inner functions whose derivative is in the Nevanlinna class, which were first studied in \cite{craizer} and have recently attracted some attention \cite{ivrii19 , ivriinicolau1, ivriikreitner, ivrii2024innerfunctionslaminations}. 
	
	Let $f$ be an analytic self-mapping of the unit disc. Define the {\em Möbius distortion} of $f$ as 
	$$
	\mu (f) (z)   \ = \ 1 - D_h (f) (z), \qquad z \in \mathbb{D}.
	$$
	The Möbius distortion $\mu(f) (z)$ measures how much $f$ deviates from an automorphism of $\mathbb{D}$ near $z \in \mathbb{D}$. Several natural classes of inner functions can be described using the Möbius distortion. For instance, M. Heins proved in \cite{MR861696} that $\mu (f) (z) \to 0$ as $|z| \to 1 $ if and only if $f$ is a finite Blaschke product. Local versions of this result can be found in \cite{krausrothrusc}. In \cite{kraus} D. Kraus proved that $f$ has a finite angular derivative at almost every point of the unit circle if and only if for almost every point $\xi \in \mathbb{T}$ we have $\mu (f) (z) \to 0$ as $z$ tends non-tangentially to $\xi$. 
	
	Consider the accumulated Möbius distortion defined as 
	\begin{equation}
		\label{AREAFUNCTION}
		A(f) (\xi ) = \int_0^1 \mu(f) (r \xi) \frac{2 dr}{1-r^2} , \quad \xi \in \mathbb{T}.
	\end{equation}
	The following pointwise estimate has been recently proved in \cite{MR4887224} and \cite{ivrii2024innerfunctionslaminations} (see also \cite{ivriinicolau2}): for every analytic self-mapping $f$ of the unit disc that fixes the origin, we have 
	\begin{equation}
		\label{POINTWISE ESTIMATE}
		A(f) (\xi ) \leq \log |f'(\xi)|, \quad \xi \in \mathbb{T}  . 
	\end{equation}
	It is worth mentioning that a converse estimate of the form $\log |f'(\xi)| \leq C_1  A(f) (\xi) + C_2$  where $C_1, C_2$ are absolute constants, can not hold. This can be seen by taking $f(z) = z (z-a) / (1 - az)$ where $0<a<1$ for which $f'(1) = 2/(1-a)$ and $A(f) (1) = \log (1+a) + \log 2$. However recently, P. Gumenyuk, M. Kourou, A. Moucha and O. Roth and independently O. Ivrii and M. Urbanski, have proved that for any analytic mapping $f: \mathbb{D} \rightarrow \mathbb{D}$ and any point $\xi \in \mathbb{T}$, the angular derivative $|f'(\xi)|$ is finite if and only if $A(f) (\xi) < \infty$. See \cite{MR4887224} and Theorem B.1 of \cite{ivrii2024innerfunctionslaminations}. The main purpose of this note is to show that even if $\log |f'|$ and $A(f)$ are not pointwise comparable, they still belong to the same $L^p (\mathbb{T})$ spaces, $0< p < \infty$.


	\begin{theo}\label{main}
		Fix $0< p < \infty$. Let $f:\mathbb{D}\to\mathbb{D}$ be an inner function. Then $f$ has finite $\mathcal{L} (p)$-entropy, that is $\log |f'| \in L^p (\mathbb{T})$, if and only if $A(f) \in L^p (\mathbb{T})$.
	\end{theo}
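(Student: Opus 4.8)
The plan is to collapse the whole statement onto a single nonnegative functional and then to locate the genuine difficulty. First I would normalize: if $f(0)=a\neq0$, replace $f$ by $g=\tau\circ f$, where $\tau$ is the automorphism of $\mathbb{D}$ sending $a$ to $0$. Since automorphisms are hyperbolic isometries, the chain rule for the hyperbolic derivative gives $D_h(g)=D_h(f)$, hence $\mu(g)=\mu(f)$ and, by \eqref{AREAFUNCTION}, $A(g)=A(f)$; moreover $\log|g'(\xi)|=\log|f'(\xi)|+\log|\tau'(f(\xi))|$ with $\log|\tau'|$ bounded on $\mathbb{T}$, so $\log|g'|\in L^p$ iff $\log|f'|\in L^p$. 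Thus we may assume $f(0)=0$. I would also record that, by the finiteness criterion recalled before the statement, $A(f)(\xi)<\infty$ exactly when $|f'(\xi)|<\infty$; hence the a.e.-finiteness requirements on the two sides coincide and we may work on the full-measure set where both quantities are finite.

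The key reduction is an exact identity. Writing $\rho(r)=|f(r\xi)|$ and using $\frac{d}{dr}d_h(0,f(r\xi))=\frac{2\rho'(r)}{1-\rho(r)^2}$ together with $\frac{2|f'(r\xi)|}{1-\rho(r)^2}=\frac{2D_hf(r\xi)}{1-r^2}$ (from \eqref{HYPDER} and the definition of $D_h$), the nonnegative integrand $\frac{2(|f'(r\xi)|-\partial_r|f(r\xi)|)}{1-\rho(r)^2}$ integrates, as $R\to1$, to
\[
\log|f'(\xi)|\;=\;A(f)(\xi)+G(\xi),\qquad
G(\xi):=\int_0^1\frac{2\bigl(|f'(r\xi)|-\partial_r|f(r\xi)|\bigr)}{1-|f(r\xi)|^2}\,dr\;\ge\;0,
\]
valid for any self-map with $f(0)=0$. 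Geometrically $G(\xi)$ is the excess hyperbolic length of the image curve $\{f(r\xi)\}$ over the hyperbolic distance between its endpoints, i.e. the accumulated tangential (turning) motion of $f$ along the radius. Both implications now become assertions about $G$: the easy direction is immediate since $0\le A(f)\le A(f)+G=\log|f'|$, so $\log|f'|\in L^p\Rightarrow A(f)\in L^p$; and the entire content of the theorem is the reverse bound, namely that $G\in L^p(\mathbb{T})$ whenever $A(f)\in L^p(\mathbb{T})$, for then $\log|f'|=A(f)+G\in L^p$.

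The reverse bound cannot be pointwise: for $f(z)=z(z-a)/(1-az)$ from the Introduction one computes $G(1)=-\log(1-a^2)\to\infty$ as $a\to1$ while $A(f)(1)$ stays bounded, so at an individual point $G$ can dwarf $A(f)$. The control of $G$ must therefore be global, and this is precisely where the hypothesis that $f$ is \emph{inner} (not merely a self-map) must enter. My plan is to prove a good-$\lambda$ inequality of the form $m\{\xi:G(\xi)>C\lambda\}\le\tfrac12\,m\{\xi:G(\xi)>\lambda\}+m\{\xi:A(f)(\xi)>c\lambda\}$, or equivalently a direct bound $\int_{\mathbb{T}}G^p\,dm\le C_p\int_{\mathbb{T}}A(f)^p\,dm+C_p'$, which then yields $G\in L^p$ for every $0<p<\infty$ once $A(f)\in L^p$. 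To set this up I would encode the turning through $\theta(r)=\arg f(r\xi)$ via $|f'(r\xi)|^2=\rho'(r)^2+\rho(r)^2\theta'(r)^2$, so that the integrand of $G$ becomes $\tfrac{2\rho(|h'|-\Real h')}{1-\rho^2}$ with $h=\log f$; a large value of $G(\xi)$ forces the trajectory $f(r\xi)$ to make a hyperbolically long detour before reaching its radial limit $\omega\in\mathbb{T}$. The inner-function input I would exploit is the measure-preserving property of inner functions fixing the origin, which lets one count such excursions on the target circle, combined with a Carleson/stopping-time decomposition of the radii by the scale at which the detours occur.

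\textbf{Main obstacle.} The heart of the matter is exactly this quantitative comparison of the level sets of $G$ and of $A(f)$: showing that detours large in hyperbolic scale are a rare event whose boundary measure is absorbed by $\{A(f)>c\lambda\}$. Two features make it delicate. First, $G$ is insensitive to the sign of $\rho'$: on the portions of the radius where $|f(r\xi)|$ increases one has the clean bound $|f'|-\rho'\le\rho|\theta'|$, but this degenerates where $|f(r\xi)|$ decreases, so the inward excursions of the trajectory must be handled separately, and it is here that inner-ness (a.e. radial limits of modulus one, and the measure-preserving property) is essential. Second, because the full range $0<p<\infty$ is claimed, one cannot invoke $L^p$-boundedness of any linear (e.g. conjugate-function) operator when $p\le1$; the argument must be run entirely at the level of maximal/square functions and distribution functions, so that a single good-$\lambda$ inequality serves all exponents simultaneously.
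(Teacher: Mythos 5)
Your reduction to $f(0)=0$ and your exact identity $\log|f'(\xi)|=A(f)(\xi)+G(\xi)$ with
$G(\xi)=\int_0^1 2\bigl(|f'(r\xi)|-\partial_r|f(r\xi)|\bigr)(1-|f(r\xi)|^2)^{-1}\,dr\ge 0$
are both correct; the latter is exactly the computation behind the paper's Lemma \ref{pointwise}, upgraded from an inequality to an equality, and it cleanly disposes of the easy implication $\log|f'|\in L^p\Rightarrow A(f)\in L^p$. Your example computation $G(1)=-\log(1-a^2)$ for $f(z)=z(z-a)/(1-az)$ is also right and correctly shows the converse cannot be pointwise.

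However, the converse implication — which, as you yourself say, is ``the entire content of the theorem'' — is not proved. What you offer is a plan: a good-$\lambda$ inequality of the form $m\{G>C\lambda\}\le\tfrac12 m\{G>\lambda\}+m\{A(f)>c\lambda\}$, to be obtained by encoding the turning of $f(r\xi)$ via $\arg f(r\xi)$, invoking the measure-preserving property of inner functions fixing the origin, and ``counting excursions'' with a Carleson/stopping-time decomposition. None of these steps is carried out, and it is far from clear that this route closes: in particular, the measure-preserving property counts preimages of sets on the target circle, and you give no mechanism converting that into control of the hyperbolic length of inward excursions of the trajectory, which is precisely the degenerate case you flag. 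The paper's actual argument for this step (Theorem \ref{lambdaestimate} via Lemma \ref{lengthestimate}) is quite different: it works with $G(f)(z)=\log\frac{1-|f(z)|^2}{1-|z|^2}$ rather than with the boundary quantity $G(\xi)$, performs a stopping time on maximal dyadic Carleson boxes where $G(f)$ exceeds $\lambda$, and inside each such box runs a second stopping time on the accumulated distortion together with Green's formula and the identity $\Delta G(f)=4(1-D_h(f)^2)/(1-|z|^2)^2$; the subharmonicity-type estimate from Green's formula is what replaces your proposed ``excursion counting.'' As it stands, your submission identifies the difficulty accurately but leaves it unresolved, so the hard direction is a genuine gap.
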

	When $p=1$ the result follows easily from Green's Formula and the identity
	\begin{equation}
		\label{IDENTITY}
		\Delta \biggr( \log  \frac{1-|f(z)|^2}{1-|z|^2} \biggr) = \frac{4(1 - D_h (f) (z)^2)}{(1-|z|^2)^2}, \quad z \in \mathbb{D}, 
	\end{equation}
	which one can check by direct calculation. The general case $p \neq 1$ is harder and follows from the following good-$\lambda$ inequality.
	
	\begin{theo}\label{lambdaestimate}
		Given  $0<\eta<1$ and $M>2$ there exist constants $0 < \epsilon < 1$ and $\lambda_{0}>0$ such that for any $\lambda>\lambda_{0}$ and any inner function $f: \mathbb{D} \rightarrow \mathbb{D}$ with $f(0)=0$, we have that  
		$$m(\{\xi\in\mathbb{T}:\log|f'(\xi)|\geq M \lambda \text{ and } A(f)(\xi)\leq \epsilon \lambda \})\leq \eta \, m(\{\xi\in\mathbb{T}:\log|f'(\xi)|\geq \lambda \}).$$
	\end{theo}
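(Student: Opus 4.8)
The plan is to deduce the good-$\lambda$ inequality from a localized version of the Green-type balance that yields Theorem~\ref{main} for $p=1$, run over a Whitney decomposition of the level set of $\log|f'|$. Throughout set $V(z)=\log\frac{1-|f(z)|^2}{1-|z|^2}$. By \eqref{IDENTITY} this function is subharmonic; it satisfies $V(0)=0$ because $f(0)=0$; its nontangential boundary values equal $b:=\log|f'|$ almost everywhere; and its Riesz measure $d\nu=\frac{1}{2\pi}\Delta V\,dA$ has density comparable to $\mu(f)(z)(1-|z|^2)^{-2}$, since $\mu(f)\le 1-D_h(f)^2\le 2\mu(f)$. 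In these terms, $A(f)(\xi)=\int_0^1\mu(f)(r\xi)\frac{2\,dr}{1-r^2}$ is, up to constants, a purely radial integral of the mass $\nu$, and the pointwise estimate \eqref{POINTWISE ESTIMATE} reads $A(f)\le b$ almost everywhere.

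First I would reduce to a single arc. Fix $\lambda>\lambda_{0}$ and let $\Omega=\{\xi:b(\xi)>\lambda\}$; after replacing $\Omega$ by a slightly larger open set of comparable measure (outer regularity), decompose it into the maximal disjoint arcs $\{I_j\}$. Since $M\lambda>\lambda$, the bad set $E:=\{\xi: b(\xi)\ge M\lambda,\ A(f)(\xi)\le\epsilon\lambda\}$ is contained in $\Omega=\bigcup_j I_j$, so it suffices to prove the per-arc bound $m(E\cap I_j)\le\eta\,m(I_j)$ and to sum. For each $I_j$ let $Q_j$ be the associated Carleson box and $z_j$ its top; the maximality of $I_j$ forces $b\le\lambda$ on the adjacent boundary, which, through the standard nontangential-maximal-function formulation of the level set, supplies the essential \emph{roof control} $V\lesssim\lambda$ on the top half of $Q_j$.

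The heart of the matter is a local estimate on $Q_j$. Writing the Riesz representation of $V$ in $Q_j$ with harmonic measure $\omega_{z_j}$ from the top and the local Green function, and invoking the roof control, I would obtain the localized analogue of the $p=1$ identity: the oscillation of $V$ above its roof value, measured nontangentially, is governed in $L^2(I_j)$ by the local accumulated distortion. Concretely, on $E\cap I_j$ the nontangential limit of $V$ exceeds $M\lambda$ while the roof value is $\lesssim\lambda$, so the oscillation is at least $(M-1)\lambda$; a square-function computation via Green's theorem on $Q_j$ bounds the $L^2(I_j)$ norm of this oscillation by the accumulated distortion, and Chebyshev turns the hypothesis into $m(E\cap I_j)\lesssim (M-1)^{-2}\epsilon^{2}\,m(I_j)$. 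With $M>2$ fixed this is $\le\eta\,m(I_j)$ once $\epsilon$ is chosen small, while $\lambda_{0}$ is taken large enough to absorb the absolute constants (for instance the bounded discrepancy between $V$ and the hyperbolic-distance functional underlying \eqref{POINTWISE ESTIMATE}). It is exactly this quadratic gain, rather than the first-moment bound $O(1/M)$ coming from a bare averaging, that lets $\eta$ be made arbitrarily small for a fixed gap $M$, as the integration argument deducing Theorem~\ref{main} requires.

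The step I expect to be the main obstacle is that $A(f)$ is a \emph{radial} functional of the Riesz mass, whereas the square-function estimate above naturally controls the oscillation of $V$ by the \emph{nontangential} (cone-integral) version of the accumulated distortion; the two are not pointwise comparable, since small radial distortion at $\xi$ does not bound the mass of $\nu$ in a full cone at $\xi$. Equivalently, the genuine content is a distributional comparison showing that the set where the radial $A(f)$ is small but its nontangential version is large is itself small. This is where the Möbius geometry should enter: because $\mu(f)$ is conformally natural, $\mu(f\circ\sigma_a)=\mu(f)\circ\sigma_a$ for the involutions $\sigma_a(z)=\frac{a-z}{1-\bar a z}$, one can average the pointwise estimate \eqref{POINTWISE ESTIMATE} applied to the normalized maps $\sigma_{f(a)}\circ f\circ\sigma_a$ over base points $a$ in the top of $Q_j$, reconstructing the cone integral from radial ones and supplying the missing nontangential control. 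Carrying out this averaging uniformly in the scale of $I_j$, together with the corresponding harmonic-measure comparison between the radial accumulated distortion and the local Green potential, and doing so in the subharmonic rather than harmonic setting, is the part that I expect to demand the most care.
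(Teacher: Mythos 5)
Your overall architecture --- a stopping time on the level set, roof control $G(f)\lesssim\lambda$ at the tops of the resulting Carleson boxes, and a Green's formula with \eqref{IDENTITY} converting the boundary excess of $G(f)$ into accumulated M\"obius distortion --- is the same as the paper's. But the central step does not close as written. You propose to bound the oscillation of $V=G(f)$ over $I_j$ in $L^2(I_j)$ ``by the accumulated distortion'' and then apply Chebyshev to get $m(E\cap I_j)\lesssim (M-1)^{-2}\epsilon^2 m(I_j)$. Any estimate obtained by integrating Green's identity over the whole box $Q_j$ controls the oscillation by the accumulated distortion integrated over \emph{all} of $I_j$, whereas the hypothesis $A(f)\le\epsilon\lambda$ is available only on the bad set $E$; on the rest of $I_j$ the distortion can be as large as $M\lambda$ (cf. \eqref{POINTWISE ESTIMATE}), so the right-hand side is not $O(\epsilon\lambda\, m(I_j))$ and no power of $\epsilon$ comes out. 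The paper's resolution is a second, \emph{inner} stopping time (Lemma \ref{lengthestimate}): inside $Q_j$ one removes the maximal dyadic sub-boxes along whose radial segments the accumulated distortion already exceeds $\epsilon\lambda+C_0$, and runs Green's formula only on the remaining sawtooth region $B$. On $B$ every radial integral of $\mu(f)$ is $\le \epsilon\lambda+C$ (Lemma \ref{eqintonlines} controls the discrepancy between neighbouring radii), which is what produces the factor $\epsilon\lambda+1$ on the right, while the bad set $E$ survives to the bottom of $B$ because its radii are never stopped. This inner stopping time is the crux of the argument and is missing from your outline; without it, or an equivalent truncation at height $\epsilon\lambda$, neither the $L^1$ nor the $L^2$ version of your inequality is true.

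Two further points. The quadratic gain you insist on is not needed, and your claim that a first-moment bound only yields $O(1/M)$ misreads the structure of the statement: the sawtooth Green's formula gives $(M-1)\lambda\, m(E\cap I_j)\le C(\epsilon\lambda+1)\,m(I_j)$, hence $m(E\cap I_j)/m(I_j)\le C(\epsilon+1/\lambda_0)/(M-2-2C_0/\lambda_0)$, and $\eta$ is made small by shrinking $\epsilon$ and enlarging $\lambda_0$ with $M$ fixed --- exactly what Theorem \ref{lambdaestimate} asks for. Pursuing an $L^2$ square-function identity for the subharmonic function $G(f)$, whose natural datum is $\Delta G(f)$ rather than $|\nabla G(f)|^2$, is both unnecessary and delicate. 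Finally, the ``main obstacle'' you single out --- comparing the radial functional $A(f)$ with its conical version --- is a non-issue: by the Beardon--Minda hyperbolic Lipschitz property of $D_h(f)$, the distortion $\mu(f)$ is comparable at hyperbolically close points, so $A(f)$ and $B_\alpha(f)$ are pointwise comparable (Lemma \ref{CA}); no averaging over M\"obius involutions is required.
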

	
	Roughly speaking, Theorem \ref{lambdaestimate} says in a quantitative way that, even though $\log |f'|$ and $A(f)$ are not pointwise comparable, the set of points $\xi \in \mathbb{T}$ where $\log |f'(\xi)|$ is large and $A(f) (\xi)$ is small, has a small measure. Our result is inspired by the classical good $\lambda$-inequalities of Fefferman and Stein which relate the size of the non-tangential maximal function of a harmonic function and the size of its Lusin area function. See \cite{feffermanstein}. These estimates have been complemented and extended to different contexts. See for instance \cite{banuelosmoore}. The proof of Theorem \ref{lambdaestimate} uses stopping time arguments, Green's Formula and the identity \eqref{IDENTITY} and it is the most technical part of the paper. It is worth mentioning that the proof also gives local versions of both Theorems \ref{main} and \ref{lambdaestimate}.

	Recall that any inner function $f$ factors as $f= B S_\mu$ where $B$ is a  Blaschke product and $S_\mu$ is a singular inner function defined as 
	\begin{equation}
		\label{singular}
		S_\mu (z) = \exp \left( - \int_{\mathbb{T}} \frac{\xi + z}{\xi - z} d \mu (\xi) \right), \quad z \in \mathbb{D}, 
	\end{equation}
	where $\mu$ is a finite positive Borel measure on $\mathbb{T}$ singular with respect to the Lebesgue measure $m$. Given a closed set $E \subset \mathbb{T}$ let $\text{dist} \left( \xi, E\right)$ denote the distance from $\xi$ to $E$. Fix $0<p<\infty$. A closed subset $E \subset \mathbb{T}$ of Lebesgue measure zero will be called a $\mathcal{C} (p)$-Beurling--Carleson set if 
	\[
	\int_{\mathbb{T}} |\log \text{dist} \left( \xi, E\right) |^p dm(\xi) < \infty.
	\]
	Note that $\mathcal{C} (1)$-Beurling--Carleson sets are the classical Beurling--Carleson sets which appear in many problems in function theory. See for instance the references in \cite{ivriinicolau1}. Our next result relates the support of a singular measure $\mu$ with the $\mathcal{L} (p)$-entropy of $S_\mu$, extending the corresponding result for $p=1$ proved in \cite{ivrii19}. See also \cite[Theorem 2]{ivriinicolau1}. 
	
	\begin{theo}\label{appl}
		Fix $0< p < \infty$. Let $\mu$ be a finite positive Borel measure on $\mathbb{T}$ which is singular with respect $m$ and let $S_\mu$ be the corresponding singular inner function defined in \eqref{singular}. Consider the following conditions: 
		\begin{enumerate}
			\item The measure $\mu$ is supported in a $\mathcal{C} (p)$-Beurling--Carleson set.
			\item $S_\mu$ has finite $\mathcal{L} (p)$-entropy, that is, $\log|S_\mu '|\in L^{p}(\mathbb{T})$.
			\item For every $0<c<1$, the integral $$\int_{\{z\in\mathbb{D}:|S_\mu (z)|<c\}} \dfrac{  | \log (1-|z|) |^{p-1}}{1-|z|}dA(z)$$ converges.	
			\item There exists $0<c<1$ such that  the integral $$\int_{\{z\in\mathbb{D}:|S_\mu (z)|<c\}} \dfrac{  | \log (1-|z|) |^{p-1}}{1-|z|}dA(z)$$ converges.	
			\item The measure $\mu$ is concentrated in a countable union of $\mathcal{C} (p)$-Beurling--Carleson sets.
		\end{enumerate}
		Then, the implications $(1)\Rightarrow (2)\Rightarrow (3)\Rightarrow (4)\Rightarrow (5)$ hold.
	\end{theo}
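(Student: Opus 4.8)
The plan is to work throughout with the Poisson representation $u(z):=\log\frac{1}{|S_\mu(z)|}=\int_{\mathbb{T}}\frac{1-|z|^2}{|\eta-z|^2}\,d\mu(\eta)$, with the formula $|S_\mu'(\xi)|=\int_{\mathbb{T}}\frac{2\,d\mu(\eta)}{|\eta-\xi|^2}$ for the (possibly infinite) angular derivative, and with the elementary identity $|\eta-r\xi|^2=(1-r)^2+r|\eta-\xi|^2$. The latter yields at once the two facts I will use repeatedly: $u(r\xi)\le 2(1-r)|S_\mu'(\xi)|$ for $r>1/2$, and $u(r\xi)/(1-r)\to|S_\mu'(\xi)|$ as $r\to1$. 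For $(1)\Rightarrow(2)$, assume $\supp\mu\subseteq E$ with $E$ a $\mathcal{C}(p)$-Beurling--Carleson set. Since $|\eta-\xi|\ge\dist(\xi,E)$ for every $\eta\in\supp\mu$, we get $|S_\mu'(\xi)|\le 2\mu(\mathbb{T})\dist(\xi,E)^{-2}$, so $(\log|S_\mu'(\xi)|)^{+}\le 2|\log\dist(\xi,E)|+C$ and, as $E$ has measure zero, $|S_\mu'(\xi)|<\infty$ for a.e.\ $\xi$. Because $|\eta-\xi|\le 2$ forces $|S_\mu'(\xi)|\ge\mu(\mathbb{T})/2$, the negative part of $\log|S_\mu'|$ is bounded, and the $\mathcal{C}(p)$ hypothesis gives $\log|S_\mu'|\in L^p(\mathbb{T})$.

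For $(2)\Rightarrow(3)$ fix $0<c<1$, put $\beta=\log(1/c)$, so that $\{|S_\mu|<c\}=\{u>\beta\}$, and pass to polar coordinates. The part of the integral over $\{|z|\le 1/2\}$ is bounded by a constant. On $\{|z|>1/2\}$ the upper bound for $u$ gives the inclusion $\{r>1/2:u(r\xi)>\beta\}\subseteq\bigl(1/2,\,1-\tfrac{\beta}{2|S_\mu'(\xi)|}\bigr)$, which is nonempty only when $|S_\mu'(\xi)|>\beta$; for such $\xi$ the radial integral is at most $\int_0^{1-\beta/(2|S_\mu'(\xi)|)}\frac{|\log(1-r)|^{p-1}}{1-r}\,dr=\frac1p\bigl|\log\tfrac{\beta}{2|S_\mu'(\xi)|}\bigr|^p\lesssim(\log|S_\mu'(\xi)|)^p+C$. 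Integrating in $\xi$ and invoking $(2)$ yields convergence for every $c$; in particular $(3)\Rightarrow(4)$ is trivial.

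The implication $(4)\Rightarrow(5)$ is the heart of the matter, and I would base it on a comparison of the area integral with Beurling--Carleson sums. For a closed set $F\subseteq\mathbb{T}$ let $R_F$ denote the union of the top halves $T(I)=\{z:z/|z|\in I,\ 1-|I|\le|z|<1-|I|/2\}$ of the dyadic arcs $I$ meeting $F$. Using $\int_{T(I)}\frac{|\log(1-|z|)|^{p-1}}{1-|z|}\,dA\asymp|I|\,(\log\frac{1}{|I|})^{p-1}$ together with the identity $m(F_\delta)=m(F)+\sum_j\min(\ell_j,\delta)$ for the $\delta$-neighbourhood of $F$ (the $\ell_j$ being the lengths of the complementary arcs), one checks that $\int_{R_F}\frac{|\log(1-|z|)|^{p-1}}{1-|z|}\,dA\asymp\sum_j\ell_j\,(\log\frac{1}{\ell_j})^{p}$ whenever the left side is finite, and that finiteness forces $m(F)=0$; hence $F$ is a $\mathcal{C}(p)$-Beurling--Carleson set precisely when this integral converges. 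Since $z\in T(I)$ forces $u(z)\gtrsim\mu(I)/|I|$, every dyadic $I$ with $\mu(I)/|I|>C\beta$ satisfies $T(I)\subseteq\{|S_\mu|<c\}$. Because $\mu\perp m$, the density $\mu(I)/|I|$ tends to $\infty$ as $I\downarrow\xi$ for $\mu$-a.e.\ $\xi$, so I would set $B_n=\{\xi:\mu(I)/|I|>C\beta\text{ for every dyadic }I\ni\xi\text{ with }|I|\le 2^{-n}\}$ and $E_n=\overline{B_n}$. Any dyadic arc of length $\le 2^{-n}$ meeting $E_n$ meets $B_n$ (the arcs being open), hence has density $>C\beta$, so $R_{E_n}$ lies in $\{|S_\mu|<c\}$ outside a compact piece; hypothesis $(4)$ then makes $\int_{R_{E_n}}$ finite, so each $E_n$ is a $\mathcal{C}(p)$-Beurling--Carleson set, while $\mu(\mathbb{T}\setminus\bigcup_n B_n)=0$ shows $\mu$ is concentrated on $\bigcup_n E_n$.

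The main obstacle is this last step, and within it the comparison $\int_{R_F}\asymp\sum_j\ell_j(\log\frac{1}{\ell_j})^{p}$. Its proof must account for the passage from the exponent $p-1$ in the area integral to the exponent $p$ in the Beurling--Carleson sum; this gain of one logarithm arises from summing the box contributions $|I|(\log\frac1{|I|})^{p-1}$ over the roughly $\log(1/\ell_j)$ dyadic scales sitting above each complementary arc of length $\ell_j$, and the bookkeeping in the neighbourhood identity is where I expect the real work. A secondary technical point is to verify that replacing $B_n$ by its closure neither spoils the density property of the meeting boxes nor creates positive measure, and that the exhaustion by the scales $2^{-n}$ correctly absorbs the oscillation of $\mu(I)/|I|$ along a typical radius.
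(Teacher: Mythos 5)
Your argument is correct in substance, and for $(1)\Rightarrow(2)$, $(2)\Rightarrow(3)$ and $(3)\Rightarrow(4)$ it essentially coincides with the paper's proof: the same bound $|S_\mu'(\xi)|\lesssim \dist(\xi,E)^{-2}$ together with the dyadic characterization of $\mathcal{C}(p)$-Beurling--Carleson sets handles the first implication, and your polar-coordinate computation for the second is the paper's, except that you use the Poisson-integral inequality $u(r\xi)\leq 2(1-r)|S_\mu'(\xi)|$, which is special to singular inner functions, where the paper uses Ahern's estimate $|f'(r\xi)|\leq 4|f'(\xi)|$ and so obtains $(2)\Rightarrow(3)$ for arbitrary inner functions. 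The genuine divergence is in $(4)\Rightarrow(5)$. The paper runs the heavy--light stopping-time decomposition of \cite{ivriinicolau1}, which produces closed sets $E_{j,l}=\overline{I_j^{(l)}}\setminus\bigcup_k (J_k^{(l)})^{\circ}$ carrying $\mu$ such that, by maximality, \emph{every} dyadic subarc meeting $E_{j,l}$ has density $\geq M/100$ --- no closure operation and no a.e.\ differentiation theorem are needed. You instead take the level sets $B_n$ where the dyadic density exceeds $C\beta$ at all scales below $2^{-n}$, invoke the fact that the dyadic density of a singular measure tends to $\infty$ $\mu$-a.e., and pass to $E_n=\overline{B_n}$; this works, but it is where your two flagged technicalities live. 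The first (a dyadic arc meeting $\overline{B_n}$ need not meet $B_n$ if arcs are half-open) is repairable: such an arc shares an endpoint with an adjacent dyadic arc of the same length that does meet $B_n$, and each good arc absorbs at most two bad neighbours, so the dyadic sum is still controlled. The second, the comparison $\int_{R_F}|\log(1-|z|)|^{p-1}(1-|z|)^{-1}\,dA\asymp\sum_j\ell_j|\log\ell_j|^{p}$ with finiteness forcing $m(F)=0$, is exactly the equivalence (a)$\Leftrightarrow$(c) of the paper's Lemma \ref{auxili} (quoted from \cite{ivriinicolau1}) combined with the per-box estimate $\int_{T(I)}\asymp |I|\,|\log|I||^{p-1}$; your sketch of the bookkeeping (summing $|I|\,|\log|I||^{p-1}$ over the $\approx\log(1/\ell_j)$ scales above each complementary arc) is the right computation, so nothing essential is missing, but you are in effect reproving a lemma the paper simply cites. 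In exchange, your construction avoids the iterated stopping time and makes the role of the a.e.\ infinite density of singular measures explicit.
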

	Fix $0< p < \infty$. A positive finite Borel measure $\mu$ on $\mathbb{T}$ which is singular with respect to $m$ will be called $\mathcal{L} (p)$-invisible if for every non-trivial positive measure $\nu\leq\mu$, the corresponding singular inner function $S_\nu$ satisfies $\log|S'_\nu|\notin L^p(\mathbb{T})$. The case $p=1$ was already considered in \cite{ivrii19}. As a direct consequence of Theorem \ref{appl}, we have the following description of $\mathcal{L} (p)$-invisible measures which in the case $p=1$ was already proved in \cite{ivrii19}.
	
	
	\begin{cor}
		A positive finite Borel measure $\mu$ on $\mathbb{T}$ which is singular with respect to the Lebesgue measure $m$, is $\mathcal{L} (p)$-invisible if and only if $\mu(E)=0$ for any $\mathcal{C} (p)$-Beurling--Carleson set $E$.
	\end{cor}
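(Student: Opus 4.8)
The plan is to deduce the Corollary directly from Theorem~\ref{appl} by restricting measures and arguing by contraposition in each direction. No new analytic input is needed: all the work has already been done in Theorem~\ref{appl}, so the argument is purely measure-theoretic and rests on the two implications $(1)\Rightarrow(2)$ and $(2)\Rightarrow(5)$ (the latter obtained by composing $(2)\Rightarrow(3)\Rightarrow(4)\Rightarrow(5)$).

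For the ``only if'' direction I would show the contrapositive: if $\mu$ charges some $\mathcal{C}(p)$-Beurling--Carleson set, then $\mu$ is not $\mathcal{L}(p)$-invisible. So suppose there is a $\mathcal{C}(p)$-Beurling--Carleson set $E$ with $\mu(E)>0$, and set $\nu=\mu|_{E}$. Then $\nu$ is a non-trivial positive measure with $\nu\leq\mu$. Since $E$ has Lebesgue measure zero and $\mu$ is singular, $\nu$ is again singular with respect to $m$, and because $E$ is closed and $\nu$ is concentrated on $E$ we have $\supp\nu\subseteq E$; thus $\nu$ satisfies condition $(1)$ of Theorem~\ref{appl}. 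The implication $(1)\Rightarrow(2)$ then gives $\log|S_\nu'|\in L^{p}(\mathbb{T})$, which exhibits a non-trivial $\nu\leq\mu$ violating the invisibility requirement. Hence an $\mathcal{L}(p)$-invisible $\mu$ must satisfy $\mu(E)=0$ for every $\mathcal{C}(p)$-Beurling--Carleson set $E$.

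For the ``if'' direction I would again argue contrapositively: assume $\mu$ is not $\mathcal{L}(p)$-invisible and produce a $\mathcal{C}(p)$-Beurling--Carleson set of positive $\mu$-measure. By hypothesis there is a non-trivial positive $\nu\leq\mu$ with $\log|S_\nu'|\in L^{p}(\mathbb{T})$, i.e.\ $\nu$ satisfies condition $(2)$. Applying $(2)\Rightarrow(5)$, the measure $\nu$ is concentrated in a countable union $\bigcup_{n} E_{n}$ of $\mathcal{C}(p)$-Beurling--Carleson sets, so $\nu\bigl(\mathbb{T}\setminus\bigcup_{n}E_{n}\bigr)=0$ and therefore $\nu\bigl(\bigcup_{n}E_{n}\bigr)=\|\nu\|>0$. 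By countable subadditivity some index $n$ has $\nu(E_{n})>0$, whence $\mu(E_{n})\geq\nu(E_{n})>0$. This contradicts the assumption that $\mu$ annihilates every $\mathcal{C}(p)$-Beurling--Carleson set, completing the equivalence.

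I do not expect a genuine obstacle here, since the hard estimates are absorbed into Theorem~\ref{appl}. The only points warranting a line of verification are the elementary facts that a restriction $\mu|_{E}$ of a singular measure to a null set remains singular and is supported inside $E$, and that ``concentrated in a countable union'' forces strictly positive mass on at least one member of the union; both are routine and require no further machinery.
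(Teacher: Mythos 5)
Your argument is correct and is precisely the intended deduction: the paper itself offers no separate proof, stating only that the corollary is a direct consequence of Theorem \ref{appl}, and your two contrapositive steps (restricting $\mu$ to a charged $\mathcal{C}(p)$-Beurling--Carleson set and invoking $(1)\Rightarrow(2)$; extracting a positively charged set from the countable union supplied by $(2)\Rightarrow(5)$) are exactly the standard way to fill that in. The auxiliary facts you flag --- that $\mu|_E$ is singular, non-trivial, and supported in the closed null set $E$, and that a non-trivial measure concentrated on a countable union must charge some member --- are indeed routine and complete the argument.
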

	
	\par This paper is organised in three further sections. The next section is devoted to some auxiliary results which will be used in the proofs of Theorem \ref{main} and Theorem \ref{lambdaestimate} which are given in Section \ref{sec3}. Lastly, in Section \ref{sec4} we prove Theorem \ref{appl}.
	
	We will use the letter $C$ to denote different absolute constants whose values may change from line to line.
	
	\section{Preliminaries}\label{sec2}
	Given an analytic mapping $f: \mathbb{D} \rightarrow \mathbb{D}$ consider
	\begin{equation}
		\label{G(f)}
		G(f) (z) = \log \frac{1- |f(z)|^2}{1- |z|^2}, \quad z \in \mathbb{D}. 
	\end{equation}
	For a point $z\in\mathbb{D} \setminus \{0\}$ we will denote by $I(z)$ the subarc of $\mathbb{T}$, centered at $z/|z|$ with $m(I(z) ) = 1- |z|$.
	We start with some elementary properties of $G(f)$ which will be used in the proof of Theorem \ref{lambdaestimate}. 
	
	\begin{lm}\label{Gprop}
		Let $f: \mathbb{D} \rightarrow \mathbb{D}$ be an analytic mapping and consider $G(f)$ as defined in \eqref{G(f)}.  Then,  
		
		(a) There exists a universal constant $C>0$ such that $|G(f)(z)-G(f)(w)|\leq Cd_h(z,w)$ for any $z,w\in\mathbb{D}.$
		
		
		(b) There exists a universal constant $C>0$ such that $G(f)(w) \geq G(f) (z) - C$ for any $z,w \in \mathbb{D}$ with $I(w) \subset I(z)$. 
		
		(c) The inequality $ | \log |z| ||\nabla G(f)(z)|\leq 4$ holds for any $z \in \mathbb{D}$ with $ \frac{1}{2} <|z|<1$. 
		
	\end{lm}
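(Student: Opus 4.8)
The common engine for parts (a) and (c) is a pointwise bound on the \emph{hyperbolic} gradient of $G(f)$. Writing $G(f)(z)=\log(1-|f(z)|^2)-\log(1-|z|^2)$ and using $|\nabla u|=2|\partial_z u|$ for real-valued $u$, I would compute $\partial_z\log(1-|z|^2)=-\bar z/(1-|z|^2)$ and $\partial_z\log(1-|f(z)|^2)=-\overline{f(z)}f'(z)/(1-|f(z)|^2)$. Multiplying by $1-|z|^2$ gives $(1-|z|^2)|\nabla\log(1-|z|^2)|=2|z|\le 2$ and $(1-|z|^2)|\nabla\log(1-|f|^2)|=2|f(z)|\,D_hf(z)\le 2$, the last inequality by $|f|\le 1$ and the Schwarz--Pick bound \eqref{HYPDER}. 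Hence $(1-|z|^2)|\nabla G(f)(z)|\le 4$ for every $z\in\mathbb{D}$, which is the estimate driving the two boundedness statements.

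For (a) I would integrate this estimate along the hyperbolic geodesic $\gamma$ joining $z$ and $w$. Since the hyperbolic length element is $2|d\zeta|/(1-|\zeta|^2)$, the factor $1-|\zeta|^2$ cancels and $|G(f)(z)-G(f)(w)|\le\int_\gamma|\nabla G(f)|\,|d\zeta|\le 4\int_\gamma\frac{|d\zeta|}{1-|\zeta|^2}=2\,d_h(z,w)$, giving (a) with $C=2$. For (c) the same gradient bound reduces the claim to the elementary inequality $-\log|z|\le 1-|z|^2$ on $\tfrac12<|z|<1$ (both sides vanish at $|z|=1$, and one checks $1-s^2+\log s\ge 0$ on $[\tfrac12,1]$ since this function is nonnegative at both endpoints and unimodal). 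Then $|\log|z||\,|\nabla G(f)(z)|\le\frac{|\log|z||}{1-|z|^2}\,(1-|z|^2)|\nabla G(f)(z)|\le 4$.

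The delicate point is (b), for which (a) is useless: when $I(w)\subset I(z)$ the hyperbolic distance $d_h(z,w)$ may be arbitrarily large, so the two-sided Lipschitz bound gives nothing and one genuinely needs a one-sided gain. Here I would use the Schwarz--Pick inequality in the form $1-\rho(f(z),f(w))^2\ge 1-\rho(z,w)^2$ together with the identity $1-\rho(a,b)^2=(1-|a|^2)(1-|b|^2)/|1-\bar ab|^2$. Solving for $1-|f(w)|^2$ and bounding $|1-\overline{f(z)}f(w)|\ge 1-|f(z)|$ (so that $|1-\overline{f(z)}f(w)|^2/(1-|f(z)|^2)\ge (1-|f(z)|^2)/4$) yields $\frac{1-|f(w)|^2}{1-|w|^2}\ge \frac14\,\frac{1-|z|^2}{|1-\bar zw|^2}\,(1-|f(z)|^2)$, that is, $G(f)(w)\ge G(f)(z)-\log 4+2\log\frac{1-|z|^2}{|1-\bar z w|}$. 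It then remains to observe that $I(w)\subset I(z)$ forces $|1-\bar z w|\le C(1-|z|^2)$: indeed $|w|\ge|z|$ gives $1-|z||w|\le 1-|z|^2$, while the arcs being nested bounds the angular gap between $z/|z|$ and $w/|w|$ by a fixed multiple of $1-|z|$, so $|1-\bar z w|\le(1+\pi)(1-|z|^2)$. This makes the last term bounded below and proves (b). I expect this extraction of a one-sided estimate from Schwarz--Pick, rather than the two-sided bound of (a), to be the main obstacle.
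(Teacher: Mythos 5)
Your argument is correct in all three parts, and I checked the computations: the gradient identity, the bound $(1-|z|^2)|\nabla G(f)(z)|\le 4$, the inequality $-\log|z|\le 1-|z|^2$ on $\tfrac12<|z|<1$, and the chain of estimates in (b) all hold. Parts (a) and (c) are close to the paper's proof: the paper also computes $\nabla G(f)(z)=2z/(1-|z|^{2})-2\overline{f'(z)}f(z)/(1-|f(z)|^{2})$ and combines it with $-\log|z|\le 1-|z|^2$ and Schwarz--Pick for (c); for (a), however, it splits $G(f)(z)-G(f)(w)$ into $\log\frac{1-|f(z)|^2}{1-|f(w)|^2}+\log\frac{1-|w|^2}{1-|z|^2}$ and bounds each term by a multiple of $d_h(z,w)$ using the hyperbolic Lipschitz property of $z\mapsto\log(1-|z|^2)$ together with the Schwarz lemma, whereas you integrate the uniform gradient bound along the geodesic; your route is slightly cleaner and produces the explicit constant $C=2$. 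The genuine divergence is in (b), which you correctly identify as the delicate part. The paper rewrites $G(f)(w)-G(f)(z)=\log\frac{1-|z|^2}{1-|w|^2}+d_h(0,f(z))-d_h(0,f(w))+2\log\frac{1+|f(w)|}{1+|f(z)|}$, uses that $I(w)\subset I(z)$ forces $\log\frac{1-|z|^2}{1-|w|^2}\ge d_h(z,w)-C_1$, and cancels the $d_h(z,w)$ term against $d_h(f(z),f(w))\le d_h(z,w)$ via the triangle inequality. You instead apply Schwarz--Pick in the form $1-\rho(f(z),f(w))^2\ge 1-\rho(z,w)^2$, expand with the identity $1-\rho(a,b)^2=(1-|a|^2)(1-|b|^2)/|1-\bar ab|^2$, and reduce to the geometric estimate $|1-\bar zw|\le(1+\pi)(1-|z|^2)$ for nested arcs. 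The two arguments carry the same content --- your term $2\log\frac{1-|z|^2}{|1-\bar zw|}$ is, up to an additive constant, the paper's $\log\frac{1-|z|^2}{1-|w|^2}-d_h(z,w)$ --- but yours gives explicit constants throughout, while the paper's stays entirely in the language of hyperbolic distances, which is the formalism reused later in Section 3.
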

	
	\begin{proof}
		
		(a) Note that 
		$$
		G(f) (z) - G(f) (w) = \log \frac{1-|f(z)|^2}{1 - |f(w)|^2}  + \log \frac{1-|w|^2}{1 - |z|^2}, \quad z, w \in  \mathbb{D} .
		$$
		Since there exists an absolute constant $C_1>0$ such that
		\begin{equation}
			\label{log}
			\left|\log\frac{1- |w|^2}{1 - |z|^2} \right| \leq C_1 d_h (z,w), \quad z, w \in \mathbb{D} , 
		\end{equation}
		Schwarz's Lemma gives that $|G(f) (z) - G(f)(w)| \leq 2 C_1 d_h(z, w) $, $z, w \in \mathbb{D}$, which gives the estimate in (a). 
		
		(b) Note that 
		$$
		G(f) (w) - G(f) (z) = \log \frac{1-|z|^2}{1 - |w|^2}  + d_h (0, f(z)) - d_h (0, f(w) ) + 2 \log \frac{1+ |f(w)|}{1 + |f(z)|}, 
		$$
		for any $z,w \in \mathbb{D}$. Observe that there exists an absolute constant $C_1 >0$ such that for any $z,w \in \mathbb{D}$ with $I(w) \subset I(z) $, we have 
		$$
		\log \frac{1-|z|^2}{1 - |w|^2} \geq d_h (z, w) -    C_1.
		$$
		The triangular inequality gives that 
		$$
		G(f) (w) - G(f) (z) \geq d_h (z,w) - C_1 - d_h (f(z), f(w)) - 2 \log 2 
		$$ 
		and Schwarz's Lemma finishes the proof.
		
		(c) Given a differentiable function $u : \mathbb{D} \rightarrow \mathbb{R}$ we use the notation $\nabla u (z) = u_x (z) + i u_y (z)$, $z \in \mathbb{D}$. A simple computation shows that $\nabla \log(1-|f(z)|^{2})=-2\overline{f'(z)}f(z) /(1-|f(z)|^{2})$, $z\in \mathbb{D}$. Thus
		$$
		\nabla G(f)(z)=2\frac{z}{1-|z|^{2}}-2\frac{\overline{f'(z)}f(z)}{1-|f(z)|^{2}} , \quad z \in \mathbb{D}.
		$$
		Since $- \log |z|\leq 1-|z|^{2}$ for any $\frac{1}{2} < |z| < 1$, applying the Schwarz-Pick inequality we get that
		$$
		|\log |z| | |\nabla G(f)(z)|\leq 2\big|z-\overline{f'(z)}f(z)\frac{1-|z|^{2}}{1-|f(z)|^{2}}\big|\leq 2+ 2D_h (f) (z) \leq 4, \quad   \frac{1}{2}< |z| <1 .$$ 
	\end{proof}
	
	For $1 \leq p < \infty$, let $\mathcal{N}_p $ be the class of analytic functions $g$ in $\mathbb{D}$ such that
	$$
	\sup_{0< r < 1} \int_{\mathbb{T}} (\log^+ |g(r \xi)|)^p dm (\xi) < \infty.  
	$$
	These classes were first considered by I. Privalov. Note that $\mathcal{N}_1$ is the Nevanlinna class. For $p>1$, Privalov proved that $g \in \mathcal{N}_p$ if and only if $g$ factors as $g=IE$, where $I$ is an inner function and $E$ is an outer function whose boundary values satisfy $\log |E| \in L^{p} (\mathbb{T})$. See \cite[pag. 93]{privalov}.
	Our next auxiliary result relates inner functions with finite $\mathcal{L} (p)$-entropy with the corresponding Privalov class.
	
	\begin{lm}\label{privalov classes}
		Let $f$ be an inner function and $1 \leq p < \infty$. Then $f$ has finite $\mathcal{L} (p)$-entropy if and only if $f' \in \mathcal{N}_p$. 
	\end{lm}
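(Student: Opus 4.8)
The plan is to deduce the lemma from Privalov's theorem, recalled above, together with the subharmonicity of $G(f)$ furnished by the identity \eqref{IDENTITY}. Throughout I use that, once $f'$ is known to lie in the Nevanlinna class $N=\mathcal{N}_1$, its radial limits exist $m$-almost everywhere and coincide with the angular derivative $|f'(\xi)|$; thus the boundary function of $f'$ is finite $m$-a.e.\ exactly when $|f'(\xi)|<\infty$ $m$-a.e., and its modulus is the quantity appearing in the $\mathcal{L}(p)$-entropy.

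First I would dispose of the easy implication $f'\in\mathcal{N}_p\Rightarrow f$ has finite $\mathcal{L}(p)$-entropy. When $p=1$ we have $\mathcal{N}_1=N$, and the boundary values of a non-trivial Nevanlinna function have integrable logarithm, so $\log|f'|\in L^1(\mathbb{T})$. When $p>1$, Privalov's theorem gives a factorization $f'=IE$ with $I$ inner and $E$ outer satisfying $\log|E|\in L^p(\mathbb{T})$; since $|I|=1$ $m$-a.e.\ on $\mathbb{T}$, we get $|f'(\xi)|=|E(\xi)|$ and hence $\log|f'|\in L^p(\mathbb{T})$. In both cases $|f'(\xi)|<\infty$ $m$-a.e., so $f$ has finite $\mathcal{L}(p)$-entropy.

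For the converse, assume $\log|f'|\in L^p(\mathbb{T})$. Since $L^p(\mathbb{T})\subset L^1(\mathbb{T})$, the case $p=1$ (finite entropy is equivalent to $f'\in N$) already yields $f'\in N=\mathcal{N}_1$ and settles $p=1$, so I fix $p>1$. By the Schwarz--Pick inequality \eqref{HYPDER} one has $|f'(z)|=D_h(f)(z)\,(1-|f(z)|^2)/(1-|z|^2)\le (1-|f(z)|^2)/(1-|z|^2)$, whence $\log^{+}|f'|\le G(f)^{+}$. By \eqref{IDENTITY} we have $\Delta G(f)\ge 0$, so $G(f)$, and also $\log|f'|$, is subharmonic; since $t\mapsto(t^{+})^{p}$ is convex and non-decreasing, $(\log^{+}|f'|)^{p}$ is subharmonic and $r\mapsto\int_{\mathbb{T}}(\log^{+}|f'(r\xi)|)^{p}\,dm(\xi)$ is non-decreasing. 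The decisive point is that the least harmonic majorant of $\log^{+}|f'|$ carries no singular mass on $\mathbb{T}$, that is, $f'$ belongs to the Smirnov class $N^{+}$. Granting this, the least harmonic majorant equals the Poisson integral $P[\log^{+}|f'|]$, so $\log^{+}|f'|\le P[\log^{+}|f'|]$, and Jensen's inequality applied to the average $P$ gives $\int_{\mathbb{T}}(\log^{+}|f'(r\xi)|)^{p}\,dm\le\int_{\mathbb{T}}\bigl(P[\log^{+}|f'|](r\xi)\bigr)^{p}\,dm\le\|\log^{+}|f'|\|_{L^{p}}^{p}$ for every $r<1$. Hence $\sup_{r}\int_{\mathbb{T}}(\log^{+}|f'(r\xi)|)^{p}\,dm<\infty$, i.e.\ $f'\in\mathcal{N}_{p}$, as desired.

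The main obstacle is thus the Smirnov-class membership $f'\in N^{+}$, equivalently the absence of escaping singular mass; note that the lemma would be false for a general $g\in N$ (e.g.\ $g=1/S_\mu$), so this step must genuinely exploit that $f'$ is the derivative of an inner function with finite entropy. I would approach it through the subharmonic function $G(f)$, whose radial limits are $\log|f'|$ almost everywhere. Writing $\sigma\ge 0$ for the singular part of the weak-$*$ boundary limit of $G(f)(re^{i\theta})\,d\theta/2\pi$, Jensen's formula on the circles $|z|=r$ together with \eqref{IDENTITY} gives
\[
\lim_{r\to 1}\frac{1}{2\pi}\int_{0}^{2\pi}G(f)(re^{i\theta})\,d\theta
= G(f)(0)+\frac{1}{2\pi}\int_{\mathbb{D}}\log\frac{1}{|w|}\,\frac{4\bigl(1-D_h(f)(w)^{2}\bigr)}{(1-|w|^{2})^{2}}\,dA(w),
\]
while this same limit equals $\int_{\mathbb{T}}\log|f'|\,dm+\sigma(\mathbb{T})$. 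Finiteness of the Green-type integral on the right is ensured by the finite-entropy hypothesis through the pointwise estimate \eqref{POINTWISE ESTIMATE} (after the harmless normalization $f(0)=0$), and the expected outcome is the exact entropy identity $\int_{\mathbb{T}}\log|f'|\,dm=G(f)(0)+(\text{Green integral})$, which forces $\sigma(\mathbb{T})=0$ and hence $\sigma=0$; from $G(f)=P[\log|f'|]-G_\nu\le P[\log^{+}|f'|]$ one then recovers $\log^{+}|f'|\le P[\log^{+}|f'|]$, closing the argument. The delicate part I anticipate is precisely upgrading the qualitative $p=1$ equivalence to this exact identity (that the only discrepancy between the boundary averages of $G(f)$ and $\int\log|f'|\,dm$ is singular, and that it vanishes); should a self-contained argument prove cumbersome, one may instead invoke the known structural fact that the derivative of an inner function lies in $N^{+}$ as soon as it lies in $N$, after which the Poisson--Jensen estimate above completes the proof.
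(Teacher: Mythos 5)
Your easy direction ($f'\in\mathcal{N}_p\Rightarrow$ finite $\mathcal{L}(p)$-entropy) is fine, though the paper obtains it more directly from Fatou's lemma applied to the radial limits of $f'$. The trouble is the converse. Your self-contained argument for the Smirnov membership $f'\in N^{+}$ is circular: the ``exact entropy identity'' $\int_{\mathbb{T}}\log|f'|\,dm=G(f)(0)+\tfrac{1}{2\pi}\int_{\mathbb{D}}\log\tfrac{1}{|w|}\,\Delta G(f)\,dA$ is, in your own bookkeeping, precisely the statement $\sigma(\mathbb{T})=0$, so it cannot be invoked as ``the expected outcome'' that forces $\sigma(\mathbb{T})=0$ --- it is exactly what has to be proved. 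Moreover, even granting $\sigma(\mathbb{T})=0$, you need $\sigma\ge 0$ to conclude $\sigma=0$, and this sign is asserted without justification: the boundary measure of the least harmonic majorant of $G(f)$ is a priori a signed measure (for a general $g\in N$ the singular part is the difference of the singular masses coming from numerator and denominator), and nothing in the sketch rules out cancellation; the weak-$*$ convergence itself also needs a uniform bound on $\int_{\mathbb{T}}|G(f)(r\xi)|\,dm(\xi)$ that you do not supply. Finally, you lean on the $p=1$ case of the very lemma being proved and, in the fallback, on the known fact that $f'\in N$ implies $f'\in N^{+}$ for inner $f$; these are citable, but then the one genuinely new step is left unproved.

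The paper's proof shows that the entire $N^{+}$/Privalov-factorization detour is unnecessary. By part (b) of Lemma \ref{Gprop}, $G(f)$ is nearly monotone along radii, so $G(f)(r\xi)\le\lim_{s\to1}G(f)(s\xi)+C=\log|f'(\xi)|+C$; combined with the Schwarz--Pick inequality this gives the pointwise bound $|f'(r\xi)|\le\frac{1-|f(r\xi)|^2}{1-r^2}\le C|f'(\xi)|$ for all $0<r<1$ and $\xi\in\mathbb{T}$, and raising $\log^{+}$ of this to the $p$-th power and integrating yields $f'\in\mathcal{N}_p$ in one line. This pointwise estimate is exactly the harmonic-majorant-free substitute for the Smirnov property that your argument is missing.
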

	\begin{proof}
		Since $G(f) (r \xi) \to \log |f'(\xi)|$ as $r \to 1$ for any $\xi \in \mathbb{T}$, the Schwarz-Pick inequality and part (b) of Lemma \ref{Gprop} provide an absolute constant $C>0$ such that 
		$$
		|f' (r \xi)| \leq \frac{1- |f(r \xi)|^2}{1-r^2}\leq C |f'(\xi)|,  \quad 0<r<1, \xi \in \mathbb{T} . 
		$$
		Hence if $f$ has finite $\mathcal{L} (p)$-entropy, then $f' \in \mathcal{N}_p$. Conversely assume that $f' \in \mathcal{N}_p$. Then $f'$ has radial limits at $m$-almost every point of the unit circle and thus, Fatou's lemma finishes the proof.
		
	\end{proof}

	In the proof of Theorem \ref{lambdaestimate} we will also use the following technical result.
	
	\begin{lm}\label{eqintonlines}
		There exists an absolute constant $C>0$ such that for any analytic mapping $f: \mathbb{D} \rightarrow \mathbb{D}$, for any $0< \ell < 1$ and for any pair of points $z= |z|\xi_1, w= |w| \xi_2 \in \mathbb{D}$ with $|z| > \ell $ and $|w| > \ell$, we have 
		\begin{equation}
			\label{dif}
			\big|\int_{\ell}^{|z|} \mu (f) (s\xi_1)\frac{2ds}{1-s^2}-\int_{\ell}^{|w|} \mu (f) (s\xi_2) \frac{2ds}{1-s^2}\big|\leq Cd_{h}(z, w).
		\end{equation}
	\end{lm}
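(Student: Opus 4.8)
The plan is to recognize the left-hand side of \eqref{dif} as $|\Phi(z)-\Phi(w)|$, where
$$\Phi(z)=\int_{\ell}^{|z|}\mu(f)(s\,z/|z|)\frac{2\,ds}{1-s^2},\qquad \ell<|z|<1,$$
extended by $\Phi\equiv 0$ on $\{|z|\le \ell\}$, and to prove that $\Phi$ is Lipschitz for the hyperbolic metric with an absolute constant. Concretely, it suffices to establish the gradient bound $(1-|z|^2)|\nabla\Phi(z)|\le C$ for almost every $z$: since this bound holds on $\{|z|>\ell\}$ and $\nabla\Phi\equiv 0$ on $\{|z|<\ell\}$, integrating along the hyperbolic geodesic $\gamma$ joining $z$ and $w$ yields $|\Phi(z)-\Phi(w)|\le \int_\gamma \frac{C}{1-|\zeta|^2}|d\zeta|=\tfrac{C}{2}\int_\gamma\frac{2|d\zeta|}{1-|\zeta|^2}=\tfrac{C}{2}\,d_h(z,w)$. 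Extending $\Phi$ by zero is what makes this work for far-apart points: the geodesic between two points of modulus near $1$ with very different arguments passes close to the origin, where $\Phi$ is constant and contributes nothing.

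Writing $z=re^{i\theta}$, I would bound the radial and tangential derivatives separately. The radial part is immediate: since $0\le \mu(f)\le 1$ by the Schwarz--Pick inequality \eqref{HYPDER}, $|\partial_r\Phi|=\mu(f)(re^{i\theta})\frac{2}{1-r^2}\le \frac{2}{1-r^2}$. The tangential part is the substantial one. Differentiating under the integral and using that the angular velocity $\partial_\theta(se^{i\theta})=ise^{i\theta}$ has modulus $s$, one gets $\bigl|\tfrac1r\partial_\theta\Phi\bigr|\le \tfrac1r\int_\ell^r s\,|\nabla\mu(f)(se^{i\theta})|\,\frac{2\,ds}{1-s^2}$. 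The crucial input is that $\mu(f)=1-D_h(f)$ is itself hyperbolic-Lipschitz, i.e. $(1-s^2)|\nabla\mu(f)(se^{i\theta})|\le C_0$. Granting this,
$$\Bigl|\tfrac1r\partial_\theta\Phi\Bigr|\le \frac{2C_0}{r}\int_\ell^r\frac{s\,ds}{(1-s^2)^2}=\frac{C_0}{r}\Bigl(\frac{1}{1-r^2}-\frac{1}{1-\ell^2}\Bigr),$$
so that $(1-r^2)\tfrac1r|\partial_\theta\Phi|\le C_0\,\frac{r^2-\ell^2}{r(1-\ell^2)}\le C_0$, the last step because $t\mapsto (t^2-\ell^2)/(t(1-\ell^2))$ increases to $1$ at $t=1$. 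Adding the two contributions gives $(1-r^2)|\nabla\Phi|\le 2+C_0$, with a constant independent of $\ell$ and of $f$.

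The remaining task, and the main obstacle, is the hyperbolic-Lipschitz bound for $\mu(f)$, equivalently $(1-|z|^2)|\nabla D_h(f)(z)|\le C_0$. Using $\nabla D_h(f)=D_h(f)\,\nabla\log D_h(f)$ together with $\log D_h(f)=\log(1-|z|^2)+\log|f'|-\log(1-|f|^2)$, a direct computation gives
$$\nabla\log D_h(f)=-\frac{2z}{1-|z|^2}+\frac{\overline{f''}}{\overline{f'}}+\frac{2\overline{f'}f}{1-|f|^2}.$$
Multiplying by $(1-|z|^2)D_h(f)$ and using $D_h(f)=(1-|z|^2)|f'|/(1-|f|^2)$, the first and third terms contribute $2|z|D_h(f)\le 2$ and $2|f|D_h(f)^2\le 2$, while the middle term is exactly the second-order Schwarz--Pick quantity $(1-|z|^2)^2|f''|/(1-|f|^2)$. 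Everything therefore reduces to
$$\frac{(1-|z|^2)^2|f''(z)|}{1-|f(z)|^2}\le 6,\qquad z\in\mathbb{D}.$$

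I would prove this last estimate by normalising $f$ at the point $z$. With $a=z$, $b=f(z)$ and the Möbius involutions $\tau_a,\tau_b$ exchanging $0$ with $a$ and $0$ with $b$, the map $g=\tau_b\circ f\circ \tau_a$ satisfies $g(0)=0$ and $|g'(0)|=D_h(f)(z)\le 1$. Applying the Schwarz--Pick inequality to $g(w)/w$, which maps $\mathbb{D}$ into $\overline{\mathbb{D}}$ with value $g'(0)$ and derivative $g''(0)/2$ at the origin, gives $|g''(0)|\le 2(1-|g'(0)|^2)\le 2$. Expanding $g''(0)$ by the chain rule expresses $(1-|z|^2)^2 f''(z)/(1-|f(z)|^2)$ as $-g''(0)$ plus two correction terms of sizes $2|z|D_h(f)\le 2$ and $2|f(z)|D_h(f)^2\le 2$; the triangle inequality then yields the displayed bound, hence $C_0\le 10$. (At the critical points of $f$ the function $D_h(f)$ has a corner and need not be differentiable, but this concerns only a set of measure zero and does not affect the integration of the gradient estimate along geodesics.)
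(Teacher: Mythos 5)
Your proof is correct, and its engine is the same as the paper's --- the hyperbolic Lipschitz continuity of $D_h(f)$ --- but you obtain and exploit that input by a genuinely different route. The paper quotes the Beardon--Minda contraction $d_h(D_h(f)(z),D_h(f)(w))\le 2\,d_h(z,w)$, deduces $|D_h(f)(s\xi_1)-D_h(f)(s\xi_2)|\le 2\,d_h(s\xi_1,s\xi_2)$, and integrates this in $s$ through a case analysis ($|z|=|w|$ with $d_h(z,w)\le 1$; unequal moduli via the auxiliary point $z^*=|z|\xi_2$; large distances by chaining points at hyperbolic distance at most $1$). You instead prove the infinitesimal version $(1-|z|^2)|\nabla D_h(f)(z)|\le C_0$ from scratch, reducing it via $\nabla\log D_h(f)$ to the second-order Schwarz--Pick bound $(1-|z|^2)^2|f''(z)|/(1-|f(z)|^2)\le 6$, whose normalisation proof (and the resulting $C_0\le 10$) checks out; you then package the whole integration step as a single gradient estimate $(1-|z|^2)|\nabla\Phi(z)|\le 2+C_0$ for the accumulated distortion $\Phi$ and integrate along the geodesic. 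The computation $(1-r^2)r^{-1}|\partial_\theta\Phi|\le C_0(r^2-\ell^2)/(r(1-\ell^2))\le C_0$ is the clean point that makes the constant uniform in $\ell$. What your approach buys is self-containedness (no external citation) and the elimination of the case analysis and chaining; what it costs is the need to handle the measure-zero non-differentiability of $|f'|$ at critical points and of $\Phi$ across $|z|=\ell$, which you correctly flag and which is indeed harmless since $D_h(f)$ and $\Phi$ remain locally Lipschitz there, so the fundamental theorem of calculus along the geodesic still applies.
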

	
	\begin{proof}
		Since $d_h (|z|, |w|) \leq d_h (z,w)$ for any $z,w \in \mathbb{D}$, it is sufficient to show that there exists a universal constant $C>0$ such that 
		\begin{equation}
			\label{clau}
			\big|\int_{\ell}^{|z|}D_{h}(f)(s\xi_1) \frac{2ds}{1-s^2}-\int_{\ell}^{|w|} D_{h}(f)(s\xi_2) \frac{2ds}{1-s^2}\big|\leq Cd_{h}(z, w), \, z,w \in \mathbb{D} . 
		\end{equation}
		It is well known that 
		$$
		d_h (D_h (f) (z),  D_h (f) (w)) \leq 2   d_h (z,w) , \quad  z , w \in \mathbb{D} , 
		$$
		(see \cite[Corollary 3.7]{beardonminda}). Using the elementary estimate $2x \leq \log (1+x) - \log (1-x)$ for $0 \leq x < 1$, we deduce that 
		$$
		\rho (D_h (f) (s \xi_1) ,   D_h (f) (s \xi_2) ) \leq  d_h ( s \xi_1 , s\xi_2), \quad 0 \leq s < 1. 
		$$
		Hence
		$$
		|D_h (f) (s \xi_1) -   D_h (f) (s \xi_2) | \leq  2 d_h (s \xi_1, s \xi_2),  \quad 0 \leq s < 1. 
		$$
		We first assume that $|z|= |w|$ and $d_h (z, w) \leq 1$. Then there exists a universal constant $C_1 >0$ such that $d_h (s \xi_1 , s\xi_2) \leq  C_1  |z - w| / (1-s^2)$ for any $0<s<|z|$. Hence the left hand side of \eqref{clau} is bounded by
		$$
		4 C_1 \int_{\ell}^{|z|} \frac{|z-w|}{(1- s^2)^2} ds \leq 4 C_1 \frac{|z-w|}{1- |z|} \leq C d_h (z,w) ,
		$$
		where $C>0$ is an absolute constant. 
		
		Assume now that $d_h (z, w) \leq 1$ but $|z| \neq |w|$. We can assume $|z| < |w|$. Let $I$ denote the left hand side of \eqref{clau} and consider the point $z^* = |z| \xi_2$. The previous argument shows that
		$$
		I \leq C d_h (z, z^* ) + \int_{|z|}^{|w|} D_h (f) (s \xi_2) \frac{2 ds}{1- s^2}. 
		$$
		Since the last integral is bounded by $d_h (z^* , w)$ we deduce that $I \leq C d_h (z, z^*) + d_h (z^* , w)$. Now the estimate $ d_h (z, z^*) + d_h (z^* , w) \leq C_2 d_h (z,w)$ finishes the proof in this case. 
		
		Finally assume that $d_h (z,w) >1$. Let $N$ be the positive integer satisfying $N< d_h (z,w) \leq N+1$. Pick points $z_0 = z, z_1 ,  \ldots , z_N = w$ with $d_h (z_k , z_{k+1}) \leq 1$, $k=0,1, \ldots N-1$. The previous argument gives that
		$$
		\big|\int_{\ell}^{|z_{k+1}|}D_{h}(f)(s\frac{z_{k+1}}{|z_{k+1}|}) \frac{2ds}{1-s^2}-\int_{\ell}^{|z_k|} D_{h}(f)(s \frac{z_k}{|z_k|}) \frac{2ds}{1-s^2}\big|\leq C . 
		$$
		Adding over $k=0, \ldots , N-1$ one finishes the proof.        
		
		
		
	\end{proof}

	Let $\Gamma (\xi , \alpha) = \{ z \in \mathbb{D} : |z - \xi| < \alpha (1- |z|)  \}$ be the Stolz angle with vertex at $\xi \in \mathbb{T}$ and aperture $\alpha >1$. Given an analytic self-mapping $f$ of the unit disc, consider its conical accumulated Möbius distortion defined as 
	\begin{equation}
		\label{Conic}
		B_\alpha (f) (\xi ) = \int_{\Gamma(\xi, \alpha)}\mu(f) (z) \frac{dA(z)}{(1-|z|^2)^2} , \quad \xi \in \mathbb{T}. 
	\end{equation}
	It turns out that $B_\alpha (f)$ and $A(f)$ are pointwise comparable.
	
	\begin{lm}\label{CA}
		Given $\alpha >1$ there exists a constant $C(\alpha) >0$ such that for any analytic self-mapping $f$ of the unit disc and any point $\xi \in \mathbb{T}$ one has
		$$
		C(\alpha)^{-1} A(f)(\xi ) \leq B_\alpha (f) (\xi) \leq  C(\alpha) A(f)(\xi ) . 
		$$
	\end{lm}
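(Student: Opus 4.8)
The plan is to reduce the comparison to a pointwise Harnack inequality for the Möbius distortion and then carry out the classical comparison between a radial integral and a cone integral. The key point is that, writing $D_h(f)=1-\mu(f)$ and recalling the Beardon--Minda estimate $d_h(D_h(f)(z),D_h(f)(w))\le 2d_h(z,w)$ (see \cite[Corollary 3.7]{beardonminda}) already invoked in the proof of Lemma \ref{eqintonlines}, one controls $\mu(f)$ \emph{multiplicatively}. Indeed, the hyperbolic distance in $\mathbb{D}$ between the real points $1-\mu(f)(z)$ and $1-\mu(f)(w)$ equals $|\Lambda(1-\mu(f)(z))-\Lambda(1-\mu(f)(w))|$ with $\Lambda(t)=\log\frac{1+t}{1-t}$, and since $\Lambda(1-\mu)=\log(2-\mu)-\log\mu$ with $\log(2-\mu)\in[0,\log 2]$, I would deduce
\[
|\log\mu(f)(z)-\log\mu(f)(w)|\le 2d_h(z,w)+\log 2,\qquad z,w\in\mathbb{D},
\]
that is, $\mu(f)(z)\le 2e^{2d_h(z,w)}\mu(f)(w)$. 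Thus $\mu(f)$ is a Harnack weight: it is comparable at any two points lying within bounded hyperbolic distance. (If $f$ is an automorphism then $\mu(f)\equiv 0$ and both sides of the asserted inequality vanish, so we may assume $\mu(f)>0$ everywhere.)

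Next I would record the elementary geometry of the Stolz angle: for $s$ close to $1$ the slice $\Gamma(\xi,\alpha)\cap\{|z|=s\}$ is an arc $J_s$ centred at $s\xi$ with angular length $|J_s|\asymp_\alpha (1-s)$, and every point of $J_s$ lies within hyperbolic distance $C(\alpha)$ of $s\xi$. For the upper bound I would write $B_\alpha(f)(\xi)$ in polar coordinates,
\[
B_\alpha(f)(\xi)=\int_0^1\Big(\int_{J_s}\mu(f)(se^{i\theta})\,d\theta\Big)\frac{s\,ds}{(1-s^2)^2},
\]
bound $\mu(f)(se^{i\theta})\le C(\alpha)\mu(f)(s\xi)$ on $J_s$ by the Harnack inequality, and integrate out the angular variable using $|J_s|\asymp_\alpha (1-s)$. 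Since $(1-s)\cdot\frac{s}{(1-s^2)^2}\asymp \frac{2}{1-s^2}$ for $s$ near $1$ (and the two weights are comparable on $[0,\tfrac12]$), this yields $B_\alpha(f)(\xi)\le C(\alpha)\int_0^1\mu(f)(s\xi)\frac{2\,ds}{1-s^2}=C(\alpha)A(f)(\xi)$.

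The lower bound is symmetric: since $\alpha>1$ the radius $[0,\xi)$ lies in the interior of $\Gamma(\xi,\alpha)$, so $J_s$ is nonempty with $|J_s|\gtrsim_\alpha(1-s)$, and the Harnack inequality gives $\mu(f)(se^{i\theta})\ge C(\alpha)^{-1}\mu(f)(s\xi)$ on $J_s$; integrating as above produces $B_\alpha(f)(\xi)\ge C(\alpha)^{-1}A(f)(\xi)$. The part of the integrals with $|z|\le\tfrac12$ is harmless, since that region has bounded hyperbolic diameter, so $\mu(f)$ is comparable throughout it and both integrals pick up comparable contributions. I expect the main obstacle to be precisely the passage from additive to multiplicative control of $\mu(f)$: a naive Lipschitz bound $|\mu(f)(z)-\mu(f)(w)|\le Cd_h(z,w)$ does \emph{not} suffice, because the resulting additive errors, once weighted by the area element and accumulated near the axis of the cone, diverge. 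The gain comes from measuring $D_h(f)$ in the hyperbolic metric of the target, which degenerates as $D_h(f)\to 1$ (equivalently $\mu(f)\to 0$) and so forbids $\mu(f)$ from changing by more than a bounded factor over a bounded hyperbolic distance; extracting and correctly applying this Harnack inequality is the crux of the argument.
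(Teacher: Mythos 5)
Your proposal is correct and follows essentially the same route as the paper: both rest on the Beardon--Minda estimate $d_h(D_h(f)(z),D_h(f)(w))\le 2d_h(z,w)$ to obtain multiplicative (Harnack-type) comparability of $\mu(f)$ at points within bounded hyperbolic distance, and then compare the cone integral with the radial integral slice by slice. The paper simply asserts this Harnack property as a consequence of the Lipschitz bound, whereas you derive it explicitly via $\Lambda(1-\mu)=\log(2-\mu)-\log\mu$; your remark that an additive Lipschitz bound on $\mu(f)$ would not suffice is a correct and worthwhile observation, but the underlying argument is the same.
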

	
	\begin{proof}
		By \cite[Corollary 3.7]{beardonminda}, we have that $d_h(D_h(f)(z),D_h(f)(w))\leq 2d_h(z,w)$ for any pair of points $z, w \in \mathbb{D}$. Thus, for any $C_1>0$ there exists a positive constant $C_2>1$ such that  
		$$C_2^{-1} \mu (f)(w) \leq \mu(f)(z)\leq C_2 \mu (f)(w), $$
		for any pair of points $z,w\in\mathbb{D}$ with $d_h(z,w)\leq C_1$. Hence fixed $\alpha >1$, there exists a constant $C=C(\alpha) >1 $ such that
		$$
		C^{-1} \mu (f) (r \xi) \leq \mu (f) (z) \leq C \mu (f) (r \xi) 
		$$
		for any pair of points $z, r\xi \in \mathbb{D}$ with $z \in \Gamma(\xi, \alpha)$ and $(1-r)/ 2 < 1- |z| < 2(1-r)$. We deduce that 
		$$
		C^{-1} A(f)(\xi ) \leq B_\alpha (f) (\xi) \leq  C A(f)(\xi ) .
		$$

		
	\end{proof}

	\section{Proof of Theorem \ref{main}}\label{sec3}
	We start with the pointwise estimate mentioned in the Introduction  which was already proved in \cite{MR4887224} and \cite{ivrii2024innerfunctionslaminations}. For the sake of completeness we give its short proof. 
	\begin{lm}\label{pointwise}
		Let $f:\mathbb{D}\to\mathbb{D}$ be an analytic function with $f(0)=0$. Then  $A(f)(\xi)\leq \log|f'(\xi)|$ for any $\xi \in \mathbb{T}$.
	\end{lm}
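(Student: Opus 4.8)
The plan is to prove the pointwise estimate $A(f)(\xi)\leq \log|f'(\xi)|$ for an analytic self-map $f$ with $f(0)=0$ by relating the radial integral defining $A(f)(\xi)$ to the boundary behaviour of $G(f)$ through the identity \eqref{IDENTITY}. First I would recall that since $f(0)=0$ the Schwarz lemma gives $|f(z)|\leq |z|$, so $G(f)(z)=\log\frac{1-|f(z)|^2}{1-|z|^2}\geq 0$ on all of $\mathbb{D}$, and moreover $G(f)(0)=\log\frac{1}{1}=0$. The strategy is to integrate the radial derivative of $G(f)$ along the ray $\{r\xi:0\leq r<1\}$ and to compare the resulting expression with $\log|f'(\xi)|=\lim_{r\to 1}G(f)(r\xi)$.

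The key step is to express $\frac{\partial}{\partial r}G(f)(r\xi)$ in terms of the Möbius distortion. Writing out the radial derivative of $\log(1-|f|^2)$ and of $\log(1-|z|^2)$ and using the Schwarz--Pick inequality, I expect to obtain a lower bound of the form
\begin{equation}
	\label{radderiv}
	\frac{\partial}{\partial r}G(f)(r\xi)\geq \frac{2r}{1-r^2}\bigl(1-D_h(f)(r\xi)\bigr)=\frac{2r}{1-r^2}\,\mu(f)(r\xi),
\end{equation}
where the inequality comes from bounding $|f'(r\xi)|\,|f(r\xi)|$ from below or above appropriately using $D_h(f)\leq 1$ and $|f(r\xi)|\leq r$. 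Once such a differential inequality is in hand, integrating from $0$ to $1$ along the ray and using $G(f)(0)=0$ yields
\begin{equation}
	\label{integrated}
	\log|f'(\xi)|=\lim_{r\to 1}G(f)(r\xi)=\int_0^1 \frac{\partial}{\partial r}G(f)(r\xi)\,dr\geq \int_0^1 \mu(f)(r\xi)\frac{2r\,dr}{1-r^2}.
\end{equation}
Since $\mu(f)\geq 0$, replacing the weight $2r/(1-r^2)$ by the slightly larger $2/(1-r^2)$ appearing in \eqref{AREAFUNCTION} would go the wrong way, so I would instead note that the factor $r$ can be absorbed or that the estimate \eqref{radderiv} already accounts for it; the cleanest route is to carry the factor $r$ through and observe that $A(f)(\xi)=\int_0^1\mu(f)(r\xi)\frac{2\,dr}{1-r^2}$ differs from the right-hand side of \eqref{integrated} only on a compact subinterval away from the boundary where convergence is automatic, and near $r=1$ the factor $r$ is harmless.

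The main obstacle I anticipate is establishing the correct sign and form of the differential inequality \eqref{radderiv}, since the radial derivative of $G(f)$ involves the full gradient of $\log(1-|f|^2)$ rather than just $D_h(f)$, and one must use that $f(0)=0$ together with Schwarz--Pick to control the angular component. A careful computation using $\nabla\log(1-|f(z)|^2)=-2\overline{f'(z)}f(z)/(1-|f(z)|^2)$ (as recorded in part (c) of Lemma \ref{Gprop}) should show that the radial derivative dominates the distortion term; the inequality $|f(r\xi)|\leq r$ from the normalization $f(0)=0$ is precisely what makes the cross term have the right sign. If the direct radial computation proves awkward, an alternative is to invoke the Laplacian identity \eqref{IDENTITY} together with Green's formula on the disc of radius $r$, writing $G(f)$ as an average over the circle plus a Green-potential term built from $\Delta G(f)=\frac{4(1-D_h(f)^2)}{(1-|z|^2)^2}$, and then letting $r\to 1$; this is the approach that generalizes to the $L^p$ setting and I would regard verifying the boundary convergence of the Green potential as the delicate point.
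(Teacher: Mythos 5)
Your differential inequality is correct as far as it goes: writing
$$\frac{\partial}{\partial r}G(f)(r\xi)=\frac{2r}{1-r^2}-\frac{2\Real\bigl(\overline{f(r\xi)}f'(r\xi)\xi\bigr)}{1-|f(r\xi)|^2}$$
and using $|f(r\xi)|\le r$ together with Schwarz--Pick does give $\frac{\partial}{\partial r}G(f)(r\xi)\ge \frac{2r}{1-r^2}\,\mu(f)(r\xi)$, and integrating with $G(f)(0)=0$ yields $\log|f'(\xi)|\ge\int_0^1\mu(f)(r\xi)\frac{2r\,dr}{1-r^2}$. The gap is in the last step. The quantity $A(f)(\xi)$ in \eqref{AREAFUNCTION} carries the weight $\frac{2}{1-r^2}$, and the discrepancy
$$A(f)(\xi)-\int_0^1\mu(f)(r\xi)\frac{2r\,dr}{1-r^2}=\int_0^1\mu(f)(r\xi)\frac{2\,dr}{1+r}$$
is a genuinely nonnegative quantity, bounded by $2\log 2$ but in general not zero. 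Your remarks that the factor $r$ is ``harmless near $r=1$'' and that the two integrals ``differ only on a compact subinterval'' show only that the difference is finite; what your argument actually proves is $A(f)(\xi)\le\log|f'(\xi)|+2\log 2$, not the stated inequality. Additive constants are not negligible in this lemma: the example following \eqref{POINTWISE ESTIMATE} and the open question at the end of Section \ref{sec3} about removing the constant $C_2$ both turn on exactly this point.

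The repair is to differentiate the right function. The weight $\frac{2\,dr}{1-r^2}$ is the hyperbolic length element along the radius, so the natural quantity is $h(r)=d_h(0,r\xi)-d_h(0,f(r\xi))$ rather than $G(f)(r\xi)$; these differ by $2\log\frac{1+r}{1+|f(r\xi)|}\in[0,2\log 2]$, which is precisely your loss, and this correction tends to $0$ as $r\to1$ when $|f'(\xi)|<\infty$ (for then $|f(r\xi)|\to1$). Since the hyperbolic length of the curve $r\mapsto f(r\xi)$, $0\le r\le R$, equals $\int_0^R D_h(f)(r\xi)\frac{2\,dr}{1-r^2}$ and dominates $d_h(0,f(R\xi))$, while $\int_0^R\frac{2\,dr}{1-r^2}=d_h(0,R\xi)$, one gets $\int_0^R\mu(f)(r\xi)\frac{2\,dr}{1-r^2}\le h(R)$ with the correct weight and no additive constant; letting $R\to1$ gives the lemma. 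This is exactly the paper's proof. Your fallback via \eqref{IDENTITY} and Green's formula is the mechanism behind Theorem \ref{lambdaestimate}, but it is neither needed for, nor would it obviously yield, the sharp pointwise bound.
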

	\begin{proof}
		Fix $\xi \in \mathbb{T}$. We can assume $|f'(\xi)| < \infty$ and hence $f$ has radial limit of modulus $1$ along the radius ending at $\xi$. Since
		$$
		d_h (f(R \xi), 0) \leq \int_0^R  D_h (f) (r \xi) \frac{2 dr}{1-r^2} , \quad 0 < R < 1,
		$$
		we have 
		\begin{align*}
			& \int_0^R (1 - D_h (f) (r \xi))\frac{2 dr}{1-r^2} \leq d_h (0, R \xi) - d_h (f(R \xi),  0) =  \\
			& =    \log \frac{1- |f(R \xi)|}{1-R} + \log \frac{1+ R }{1 + |f(R \xi)|} ,\quad 0<R<1 .
		\end{align*}
		The estimate follows by taking $R \to 1$. 
		
	\end{proof}
	
	We now introduce some notation. Given an arc $I \subset \mathbb{T}$ we consider the Carleson box $Q=Q(I)=\{ r \xi : \xi \in I , 0<1-r \leq m(I)\}$. If $Q=Q(I)$ is a Carleson box, it is costumary to denote $\ell(Q) = m(I)$. Given a Carleson box $Q$ denote by $I(Q) = \{z/ |z| : z \in Q\}$ its radial projection on the unit circle  and $T(Q) = \{z \in Q :  1-|z| = \ell (Q) \}$ its top side. Note that $I(Q(I))= I$. Also $\xi (I)$ denotes the center of the arc $I$ and $z(Q) = (1- \ell (Q)) \xi (I(Q))$ the center of $T(Q)$. The dyadic decomposition of an arc $I \subset \mathbb{T}$ into dyadic subarcs $\{I_{j,k} \}$ gives the corresponding dyadic decomposition of the Carleson box $Q(I)$ into dyadic subboxes $\{Q(I_{j,k}) \}$. 
	
	Given a Carleson box $Q$ we consider the local accumulated Möbius distortion defined as 
	\begin{equation}
		\label{LocalAREAFUNCTION}
		A_Q (f) (\xi ) = \int_{1- \ell (Q)}^1 \mu (f) (r \xi) \frac{2 dr}{1-r^2} , \quad \xi \in I(Q) . 
	\end{equation}

	
	The next auxiliary result is the main step in the proof of Theorem \ref{lambdaestimate}. 
	
	\begin{lm}\label{lengthestimate}
		There exists a universal constant $C>0$ such that the following statement holds. Let $f$ be an inner function with $f(0)=0$. Let $a,b,c>0$ be positive constants  and let $Q\subset\{z\in\mathbb{C}:\frac{1}{2}<|z|<1\}$ be a Carleson box such that $G(f)(z (Q)) =  a$. Consider the set 
		$$E(b,c)=\{\xi\in I(Q): \log|f'(\xi)|\geq b \text{ and } A_Q (f)(\xi)\leq c\}. $$ Then 
		$$(b-a)m(E(b,c))\leq C(c+1)m(I(Q)).$$
	\end{lm}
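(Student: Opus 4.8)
The plan is to compare, on the set $E$, the boundary value $\log|f'|$ with the value of $G(f)$ near the top of the box by means of a weighted Green identity, so that the largeness of $\log|f'|$ gets paid for either by the top value ($\approx a$) or by an area integral of $\mu(f)$. I first record the ingredients. Since $f(0)=0$, Schwarz's lemma gives $|f(z)|\le|z|$, hence $u:=G(f)\ge0$ on $\mathbb{D}$; the identity \eqref{IDENTITY} gives $\Delta u\ge0$ with $\Delta u\asymp\mu(f)/(1-|z|^2)^2$; the radial boundary values of $u$ are $\log|f'|$; and by part (a) of Lemma~\ref{Gprop} the values of $u$ on the top side $T(Q)$ stay within an absolute constant of $a$. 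Write $r_0=1-\ell(Q)$, so that $r_0\xi\in T(Q)$ for $\xi\in I(Q)$, and set $v(z)=\log(1/|z|)$, which is positive and harmonic on $\{1/2<|z|<1\}\supset Q$ and vanishes on $\mathbb{T}$.

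The heart of the argument is Green's second identity $\int_{\tilde\Omega}v\,\Delta u\,dA=\int_{\partial\tilde\Omega}(v\,\partial_n u-u\,\partial_n v)\,ds$ applied on the radial ``curtain'' $\tilde\Omega=\{r\xi:\xi\in E,\ r_0<r<1\}$ above $E$ (rigorously on $\{r_0<|z|<R\}$, letting $R\to1$ at the end). Using $v=0$ and $\partial_n v=-1$ on $\mathbb{T}$, the bottom of $\partial\tilde\Omega$ contributes $\int_E\log|f'|\,d\theta$ up to an error of size $Cm(E)$, the error coming from the bound $|\log|z|\,\partial_r u|\le4$ in part (c) of Lemma~\ref{Gprop}. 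The top, at radius $r_0$, contributes $-\int_E\big(r_0\log(1/r_0)\,\partial_r u(r_0\xi)+u(r_0\xi)\big)\,d\theta$, whose first summand is again $O(m(E))$ by part (c), while $u(r_0\xi)=G(f)(r_0\xi)\le a+C$ by part (a). Finally, the hypothesis enters through the area integral: since $v\asymp1-|z|$ on $\tilde\Omega$ and $\Delta u\asymp\mu(f)/(1-|z|^2)^2$, Fubini and the definition \eqref{LocalAREAFUNCTION} of $A_Q(f)$ give $\int_{\tilde\Omega}v\,\Delta u\,dA\le C\int_E A_Q(f)\,dm\le Cc\,m(E)$, where it is crucial that we integrate only over the curtain above $E$, where $A_Q(f)\le c$.

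Collecting the boundary pieces and writing $L$ for the contribution of the lateral boundary of $\tilde\Omega$, the identity reads $\int_E\big(\log|f'|-G(f)(r_0\xi)\big)\,d\theta=\int_{\tilde\Omega}v\,\Delta u\,dA+O(m(E))-L$. On $E$ we have $\log|f'|\ge b$ and $G(f)(r_0\xi)\le a+C$, so the left-hand side is at least $c_0(b-a-C)\,m(E)$; combined with the area bound this yields $(b-a)\,m(E)\le C(c+1)\,m(E)+C|L|$, and since $m(E)\le m(I(Q))$ the lemma will follow as soon as one proves the lateral estimate $|L|\le C\,m(I(Q))$. Note that comparing $\log|f'(\xi)|$ to $G(f)(r_0\xi)$ ray by ray, rather than to a single harmonic‑measure average, is what prevents an uncontrolled additive $a\,m(I(Q))$ term from appearing.

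The lateral term $L$ is the main obstacle. On each radial wall $\partial_n v=0$ and, by part (c) of Lemma~\ref{Gprop}, $|v\,\partial_n u|\le4$; but the lateral boundary of the curtain consists of a pair of radial segments of length $\ell(Q)$ over every complementary arc of $E$ in $I(Q)$, and $E$ may have arbitrarily many such gaps, so the crude bound $|L|\le4\,(\text{total wall length})$ is not controlled by $m(I(Q))$. That $L$ genuinely cannot be discarded is consistent with the fact, already visible from the example in the introduction, that there is no pointwise reverse inequality $\log|f'|\lesssim A_Q(f)+C$, so the required gain must be two–dimensional. I expect $|L|\le C\,m(I(Q))$ to come from a Whitney decomposition of the open set $I(Q)\setminus E$ together with a Carleson packing estimate $\sum_j\ell(Q_j)\lesssim m(I(Q))$ for the associated boxes, grouping the two walls of each gap and extracting cancellation through part (c) of Lemma~\ref{Gprop} (and possibly Lemma~\ref{eqintonlines}). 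Establishing this bound is where essentially all the technical difficulty lies; everything else reduces to the bookkeeping above with the gradient estimate of Lemma~\ref{Gprop} and the identity \eqref{IDENTITY}.
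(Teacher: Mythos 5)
Your setup (Green's identity for $u=G(f)$ against $v=-\log|z|$, the identity \eqref{IDENTITY}, the top side contributing $\approx a$, the bottom contributing $\log|f'|$) matches the paper's, but your choice of domain leaves a genuine gap that you yourself flag and do not close: the lateral term $L$ for the ``curtain'' over $E$ is not controlled, and the route you sketch for it is unlikely to work. Part (c) of Lemma \ref{Gprop} only gives the pointwise bound $|v\,\partial_n u|\leq 4$ on radial walls, with no cancellation mechanism: the normal derivative on a radial wall is the angular derivative of $G(f)$, and on the two walls of a wide gap of $E$ these need not nearly cancel. A Whitney decomposition of $I(Q)\setminus E$ does not help either, because there is no packing condition available for the gaps of an arbitrary measurable set $E(b,c)$; its complementary structure in $I(Q)$ is not under your control. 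So the key estimate $|L|\leq Cm(I(Q))$, which you correctly identify as carrying ``essentially all the technical difficulty,'' remains a conjecture in your write-up.

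The paper avoids this obstacle by integrating over a different domain, produced by a stopping time rather than by the set $E$ itself. One takes the maximal dyadic Carleson sub-boxes $R_j\subset Q$ on which the accumulated M\"obius distortion along the central radius exceeds $c+C_0$ (with $C_0$ from Lemma \ref{eqintonlines}), and applies Green's formula on $B_n=(Q\setminus\cup_jR_j)\cap\{|z|\leq r_n\}$. This buys three things at once. First, the lateral boundary now consists of the sides of $Q$ and of the disjoint dyadic boxes $R_j$, so its total length is at most $Cm(I(Q))$ and the term you call $L$ is trivially absorbed via Lemma \ref{Gprop}(c); this is exactly the two-dimensional gain you were looking for, and it comes from stopping on whole dyadic boxes, not from cancellation. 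Second, by maximality the accumulated distortion along every radius inside $B_n$ is at most $c+C_0+C$, which bounds the area integral by $C(c+1)m(I(Q))$ after Fubini. Third, any $\xi$ with $A_Q(f)(\xi)\leq c$ has its radius never entering a stopped box, so $E(b,c)$ sits (in the limit $n\to\infty$) under the surviving circular piece $J_n$ of $\partial B_n$, where $G(f)\geq b$; the tops $T(R_j)$ of the discarded boxes contribute with a favorable sign by Lemma \ref{Gprop}(b) and can be dropped. Without this (or an equivalent) construction your argument does not reach the conclusion.
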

	\begin{proof}
		Given a Carleson box $R \subset Q$ we denote by $L(R) = [0, z(R)]  \cap Q$ the piece of the radial segment joining the origin and the point $z(R)$, contained in $Q$. By Lemma \ref{eqintonlines} there exists a universal constant $C_0 >0$ such that for any Carleson box $R \subset Q$ and any point $w \in T(R)$ we have
		$$
		\left|   \int_{L(R)}  \mu (f) (z) \frac{2 |dz|}{1- |z|^2}   -    \int_{[0, w] \cap Q }  \mu (f) (z) \frac{2 |dz|}{1- |z|^2}    \right| \leq C_0 .
		$$
		Now we use the following stopping time argument. Let $\{R_{j} \}$ be the collection of maximal dyadic Carleson sub-boxes of $Q$ such that
		$$
		\int_{L(R_j)} \mu (f) (z) \frac{2 |dz|}{1- |z|^2} > c + C_0 .
		$$
		Note that the maximality and Lemma \ref{eqintonlines} give that there exists a universal constant $C>0$ such that 
		\begin{equation}
			\label{max}
			c + C_0 < \int_{L(R_j)} \mu (f) (z) \frac{2 |dz|}{1- |z|^2} < c + C_0 +  C . 
		\end{equation}
		The choice of $C_0$ gives that 
		\begin{equation}
			\label{C0}
			\int_{[0, w] \cap Q }  \mu (f) (z) \frac{2 |dz|}{1- |z|^2} >c, 
		\end{equation}
		for any $w \in T(R_j)$ and any $R_j \in \{R_j \}$.
		Let us define $B=Q\setminus \cup_{j}R_{j}$ and $B_n=B\cap\{z\in\mathbb{D}:|z|\leq  r_n \}$ where $r_n = 1-2^{-n} \ell (Q)$. Applying Green's formula to the functions $G(f)$ defined in \eqref{G(f)} and $v (z)= - \log |z|$, we have that
		\begin{equation}
			\label{green}
			\int_{\partial B_n} \left(v (z)\frac{\partial G(f)(z)}{\partial n}-G(f)(z)\frac{\partial v (z)}{\partial n}  \right)d\sigma (z) = \int_{B_n} v (z)\Delta(G(f)(z))dA(z) , 
		\end{equation}
		where $dA$ denotes the area measure and $d \sigma$ denotes the linear measure on $\partial B_n$. Let us name these integrals as
		$$I_1=\int_{\partial B_n} v (z)\frac{\partial G(f)(z)}{\partial n}d\sigma (z),$$
		$$I_2=\int_{\partial B_n}G(f)(z)\frac{\partial v (z)}{\partial n}d\sigma (z) ,$$
		and 
		$$I_3=\int_{B_n} v (z)\Delta(G(f)(z))dA(z).$$
		First we estimate $I_1$. By Lemma \ref{Gprop}, part (c), we have
		\begin{equation}
			\label{I_1}
			|I_1|=|\int_{\partial{B}}v (z)\frac{\partial G(f)(z)}{\partial n}d\sigma|\leq 4 \sigma (\partial B)\leq C m(I(Q)).
		\end{equation}
		For $I_2$, note that one can decompose $\partial B_n = C_n \cup D_n $ with 
		$$C_n = T(Q) \cup (\cup_{\mathcal{A}_n} T(R_j)) \cup J_n , $$ where $\mathcal{A}_n$ is the subfamily of those $R_j$ with $\ell (R_j) \geq 1 - r_n$ and $J_n \subset \{z \in \mathbb{D} : |z| = r_n \}$, while $D_n$ is contained in a finite union of radius emanating from the origin. Roughly speaking, $\partial B_n$ is decomposed in a circular part $C_n$ and a radial part $D_n$. Since $v$ is radial, its normal derivative vanishes on $D_n$. On the other hand  
		$$
		\frac{\partial v (z)}{\partial n} = \frac{-1}{|z|}, z \in T(Q) ; \quad  \frac{\partial v(z)}{\partial n} = \frac{1}{|z|}, z  \in C_n \setminus T(Q) . 
		$$
		Hence 
		\begin{equation}
			\label{I2}
			I_2=\sum_{\mathcal{A}_n}    \frac{1}{|z(R_j)|}  \int_{T(R_j)}G(f) d\sigma + \frac{1}{r_n} \int_{J_n }G(f) d\sigma   - \frac{1}{|z(Q)|} \int_{T(Q)}G(f) d\sigma.
		\end{equation}
		Since  $G(f) (z_Q) = a$ and $m(I(Q)) = \sigma (T(Q)) / |z(Q)|$ , part (a) of Lemma \ref{Gprop} gives that there exists an absolute constant $C>0 $ such that
		\begin{equation}
			\label{a}
			\left| \frac{1}{|z(Q)|} \int_{T(Q)}G(f) d\sigma - a m(I(Q)) \right| \leq C m(I(Q))
		\end{equation}
		Since 
		$$
		m(I(Q)) = \sum_{\mathcal{A}_n } \ell (R_j) + m ( r_n^{-1} J_n ), 
		$$
		from \eqref{I2} and \eqref{a}, we deduce that
		\begin{equation}
			\label{I2nova}
			I_2 = \sum_{\mathcal{A}_n}    \frac{1}{|z(R_j)|}  \int_{T(R_j)} (G(f) -a) d\sigma + \frac{1}{r_n} \int_{J_n } (G(f) - a) d\sigma + K, 
		\end{equation}
		where $|K| \leq C m(I(Q))$. 
		
		Let us now turn our attention to $I_3$. By identity \eqref{IDENTITY}, we have
		$$
		I_3=\int_{B_n}v (z)\Delta(G(f)(z))dA(z) \leq 8 \int_{B_n} |\log |z| | \mu (f) (z) \frac{dA(z)}{(1- |z^2|)^2}. 
		$$
		Let $L(\xi)=[0,\xi]\cap B_n$. Now Fubini's theorem, estimate \eqref{max} and the choice of $C_0$ give that 
		\begin{equation}
			\label{I3}
			I_3 \leq C \int_{I(Q)} \int_{L(\xi)} \mu (f)(r\xi)\frac{2dr}{1-r^2} d\xi  \leq C(C + 2 C_0 + c) m(I(Q)) .
		\end{equation}
		Combining the above estimates \eqref{I_1}, \eqref{I2nova}, \eqref{I3}, from Green's formula in \eqref{green} we derive that there exists a universal constant $C>0$ such that
		$$\frac{1}{r_n}\int_{J_n} (G(f) - a)d\sigma+\sum_{ \mathcal{A}_n}\frac{1}{|z(R_j)|}\int_{T(R_j)}( G(f) - a) d\sigma\leq  C(1+c) m(I(Q)).$$
		By part (b) of Lemma \ref{Gprop}, there exists a universal constant $C_1>0$ such that  $G(f)(z)-a+C_1 \geq 0$ for any  $z  \in \cup_j R_j \cup J_n$. We deduce that there exist a universal constant $C>0$ such that 
		\begin{equation}
			\label{final}
			\frac{1}{r_n}\int_{J_n}(G(f)-a)d\sigma\leq C(c+1)m(I(Q)).  \end{equation}
		Consider $E_n(b,c)=\{z \in Q: |z|= r_n ,  G(f)(z)\geq b\text{ and } A_Q (f)(z / |z|)\leq c\}.$ 		Note that by \eqref{C0} we have $J_n \supset E_n(b,c)$.
		Thus, by \eqref{final}
		\begin{equation*}
			C(1+c)m(I(Q)) \geq \int_{E_n(b,c)}(G(f)-a)d\sigma \geq (b-a)m(E_n(b,c)). 
		\end{equation*}
		Therefore it suffices to notice that $\limsup m(E_n(b,c)) \geq m(E(b,c))$ which follows from the observation that $E(b,c)\subset \liminf r_n^{-1}E_n(b,c)$. 
	\end{proof}
	
	We are now ready to prove Theorem \ref{lambdaestimate}. 
	\newline
	
	\textbf{Proof of Theorem \ref{lambdaestimate}.}
	Let $C_0 >0$ be the maximum of the universal constants $C$ appearing in Lemma \ref{Gprop}, and let $\lambda_0 > 3C_0$ be a constant to be fixed later. Fix $\lambda > \lambda_0$. Let $\mathcal{A} = \{Q_j \}$ be the collection of maximal dyadic Carleson boxes $Q$ such that
	$$
	\sup \{G (f) (z) : z \in T(Q) \} > \lambda + \lambda_0 .
	$$
	Since $G(f) (0) = 0$, part (a) of Lemma \ref{Gprop} gives that $\ell (Q_j ) < 1/2$ for any $Q_j \in \mathcal{A}$. 
	Fix $Q_j \in \mathcal{A}$ and let $Q_j^*$ be the dyadic Carleson box which contains $Q_j$ with $\ell (Q_j^*) = 2 \ell (Q_j)$. The maximality gives that $\sup \{G (f) (z) : z \in T(Q_j^*) \} < \lambda + \lambda_0 .$ Since part (a) of Lemma \ref{Gprop} gives that $|G(f)(z) - G(f) (w)| \leq 2C_0$ for any pair of points $z \in T(Q_j)$, $w \in T(Q_j^*)$, we deduce that
	\begin{equation}
		\label{centre}
		\lambda \leq     G(f) (z(Q_j)) \leq  \lambda + \lambda_0 + 2 C_0.
	\end{equation}
	By part (b) of Lemma \ref{Gprop} we have
	\begin{equation}
		\label{contain}
		{\cup}_j I(Q_j) \subset \{\xi  \in  \mathbb{T}: \log |f' (\xi)| > \lambda  \}.  
	\end{equation}
	By construction we have $\log |f' (\xi)| \leq \lambda + \lambda_0$ for any $\xi \in \mathbb{T} \setminus \cup I(Q_j)$. Fix $M >2$. Since $\lambda > \lambda_0$ we have  
	$$
	\{\xi \in \mathbb{T}:  \log |f' (\xi)| > M \lambda  \} \subset \cup I(Q_j) . 
	$$
	Fix $Q_j \in \mathcal{A}$. Apply Lemma \ref{lengthestimate} with $a= G(f) (z(Q_j))$, $b= M \lambda$ and $c= \epsilon \lambda$ to obtain
	$$
	m( \{\xi \in I(Q_j) : \log |f'(\xi)| > M \lambda, A(f) (\xi) < \epsilon \lambda  \}   ) \leq C \frac{\epsilon \lambda + 1}{M \lambda - G(f) (z(Q_j))} \ell (Q_j), 
	$$
	where $C>0$ is an absolute constant. By \eqref{centre} we have
	$$
	\frac{\epsilon \lambda + 1}{M \lambda - G(f) (z(Q_j))} \leq \frac{\epsilon \lambda + 1}{(M-1) \lambda - \lambda_0 - 2 C_0} \leq \frac{\epsilon  + 1/ \lambda_0 }{(M- 2)  - 2 C_0 / \lambda_0}. 
	$$
	Given $0< \eta < 1$, taking $0 < \epsilon <1$ sufficiently small and $\lambda_0 >0$ sufficiently large, we deduce
	$$
	C \frac{\epsilon  + 1/ \lambda_0 }{(M- 2)  - 2 C_0/ \lambda_0} < \eta . 
	$$
	Hence 
	$$
	m( \{\xi \in I(Q_j) : \log |f'(\xi)| > M \lambda, A(f) (\xi) < \epsilon \lambda  \}   ) \leq \eta \, \ell (Q_j).  
	$$
	Summing over $j=1,2,\ldots$ and applying \eqref{contain}, the proof is completed. 
	\qed


	\vspace{0.1cm}
	Theorem \ref{main} follows from Theorem \ref{lambdaestimate} by standard methods.
	\vspace{0.1cm}
	
	\textbf{Proof of Theorem \ref{main}.}
	Composing $f$ with an automorphism of the unit disc, if necessary, we can assume that $f(0)=0$. By Lemma \ref{pointwise} we only need to show that $\log |f'| \in L^p (\mathbb{T})$ if $A(f) \in L^p (\mathbb{T})$. Since $f(0)=0$, Schwarz lemma gives that $|f'(\xi)| \geq 1$ for any $\xi \in  \mathbb{T}$. We use the notation $E(\lambda) = \{\xi\in\mathbb{T}:\log|f'(\xi)|\geq \lambda  \}$ for $\lambda >0$. Fix $M>2$ and apply Theorem \ref{lambdaestimate} with $\eta = 2^{-1} M^{-p}$ to find constants $0< \epsilon < 1$ and $\lambda_0 > 1$ such that
	$$
	m(\{\xi\in E(M \lambda ) : A(f)(\xi)\leq \epsilon \lambda \})\leq \frac{1}{2 M^p} \, m (E(\lambda)), 
	$$
	for any $\lambda > \lambda_0$. Then  
	\begin{align*} 
		& \int_{E(M \lambda_0)} (\log|f'|)^{p} = pM^{p}\int_{\lambda_0}^{\infty} {\lambda}^{p-1} m(E(M \lambda))d  \lambda  \\
		&\leq \frac{p}{2} \int_{\lambda_0}^{\infty} {\lambda}^{p-1} m(E(\lambda))d \lambda + p M^p \int_{\lambda_0}^{\infty} {\lambda}^{p-1} m(\{\xi\in\mathbb{T}:A(f)(\xi)\geq \epsilon \lambda \})d \lambda   \\
		& \leq \frac{1}{2} \|\log|f'|\|^{p}_{L^{p}}+\left(\frac{M}{\epsilon} \right)^{p}\|A(f)\|_{L^{p}}^p .\\ 
	\end{align*}
	Hence 
	$$
	\|\log|f'|\|^{p}_{L^{p}} \leq \frac{1}{2} \|\log|f'|\|^{p}_{L^{p}} + M^p {\epsilon}^{-p} \|A(f)\|_{L^{p}}^p + (M \lambda_0 )^p , 
	$$
	which finishes the proof. 
	\qed

	Note that the previous proof shows that given $0<p<\infty$ there exist constants $C_1,C_2>0$ depending on $p$, such that for any analytic mapping $f$ of the unit disc with $f(0)=0$, we have  
	$$\|A(f)\|^{p}_{L^{p}}\leq \|\log|f'|\|^{p}_{L^{p}}\leq C_1\|A(f)\|^{p}_{L^{p}}+C_2.$$
	The authors do not know if one can take $C_2=0$ in the estimate above. 
	

	\section{Singular Inner Functions and $\mathcal{C} (p)$-Beurling--Carleson sets}\label{sec4}
	
	We will use the following auxiliary result from \cite[Lemma 3.1]{ivriinicolau1}.
	
	\begin{lm} \label{auxili}
		Let $E \subset \mathbb{T}$ be a closed set of Lebesgue measure zero. Denote by $\{I_j\}$ the collection of its complementary arcs, that is, $\mathbb{T} \setminus E = \cup I_j $. Fix $0<p<\infty$. The following conditions are equivalent: 
		
		(a) $E$ is a $\mathcal{C} (p)$-Beurling--Carleson set.
		
		(b) $\sum |I_j| |\log |I_j||^p < \infty$. 
		
		(c) $\sum |I| | \log |I| |^{p-1}<\infty$, where the sum is taken over all dyadic arcs of $\mathbb{T}$ that meet $E$.
	\end{lm}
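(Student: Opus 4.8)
The plan is to fix the closed measure-zero set $E$, write $\mathbb{T}\setminus E = \bigcup_j I_j$ and $\ell_j = |I_j|$, and establish the two equivalences $(a)\Leftrightarrow(b)$ and $(b)\Leftrightarrow(c)$ separately. The first I would obtain by localizing the integral in $(a)$ to each complementary arc, and the second by a dyadic counting argument, comparing both quantities to the model series $\sum_j \ell_j |\log \ell_j|^p$. Throughout I would lean on two elementary comparisons, valid for every $0<p<\infty$ and $N\ge 1$: namely $\sum_{n=1}^{N} n^{p-1} \asymp N^p$ (for $0<p<1$ this follows from $\int_0^N x^{p-1}\,dx = N^p/p$, for $p\ge 1$ from monotonicity) and $\sum_{n\ge N} n^{p-1} 2^{-n} \asymp N^{p-1} 2^{-N}$; I also write $k_j$ for the integer with $2^{-k_j}\le \ell_j < 2^{-k_j+1}$, so that $|\log \ell_j| \asymp k_j$.

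For $(a)\Leftrightarrow(b)$ I would use that for $\xi \in I_j$ the quantity $\dist(\xi,E)$ equals the arclength distance from $\xi$ to the nearer endpoint of $I_j$, since any point of $E$ can only be reached from $\xi$ by crossing an endpoint of $I_j$. Parametrizing $I_j$ by arclength then gives $\int_{I_j}|\log\dist(\xi,E)|^p\,dm(\xi) = 2\int_0^{\ell_j/2}|\log t|^p\,dt$, and the substitution $t=e^{-u}$ turns the inner integral into $\int_{\log(1/\ell_j)}^{\infty}u^p e^{-u}\,du$, an incomplete Gamma integral comparable to $a^p e^{-a}$ with $a=\log(1/\ell_j)$, hence to $\ell_j|\log\ell_j|^p$ for small $\ell_j$. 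Summing over $j$ and noting that $\sum_j \ell_j = 1$ forces all but finitely many $\ell_j$ to be small, so that the finitely many large arcs contribute a bounded amount to both sides, the integral in $(a)$ converges precisely when the series in $(b)$ does.

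For $(b)\Leftrightarrow(c)$, let $N_n$ be the number of generation-$n$ dyadic arcs meeting $E$, so that the sum in $(c)$ equals $(\log 2)^{p-1}\sum_n n^{p-1} N_n 2^{-n}$. The combinatorial core is the estimate $N_n = 2^n\sum_{j:\ell_j<2^{-n}}\ell_j + T_n$ with $0\le T_n \le 2\,\#\{j:\ell_j\ge 2^{-n}\}$: a generation-$n$ dyadic arc misses $E$ exactly when it lies inside some $I_j$, the number of such arcs inside a fixed $I_j$ is $\ell_j 2^n + O(1)$ when $\ell_j \ge 2^{-n}$ and $0$ when $\ell_j < 2^{-n}$, and one collects the terms using $\sum_j \ell_j = 1$. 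Substituting into $(c)$ splits it into a main term $\sum_n n^{p-1}\sum_{\ell_j<2^{-n}}\ell_j = \sum_j \ell_j \sum_{n<\log_2(1/\ell_j)} n^{p-1} \asymp \sum_j \ell_j k_j^p \asymp (b)$, and an error term bounded by $\sum_j \sum_{n\ge k_j} n^{p-1}2^{-n} \asymp \sum_j \ell_j k_j^{p-1}$, which is dominated by $(b)$ since $k_j\ge 1$. As the error is nonnegative, this yields $(c)\asymp (b)$, and in particular their convergence is equivalent.

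I expect the counting estimate for $N_n$ to be the main obstacle, as it requires careful bookkeeping of the boundary corrections $T_n$ coming from the generation-$n$ cells that straddle an endpoint of some $I_j$, together with the verification that, once weighted by $n^{p-1}2^{-n}$, these corrections never overwhelm the main term. The remaining ingredients are the two elementary sum comparisons above, plus the routine handling of the finitely many large complementary arcs and of the top dyadic generations, where $|\log|I||$ is small or vanishes.
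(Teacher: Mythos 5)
The paper does not actually prove Lemma \ref{auxili}: it is quoted from Lemma 3.1 of \cite{ivriinicolau1}, so there is no internal argument to compare yours against. Your proof is correct and follows the standard route one would expect for this statement: the equivalence $(a)\Leftrightarrow(b)$ by localizing the integral to the complementary arcs and using the incomplete Gamma asymptotics $\int_a^\infty u^pe^{-u}\,du\asymp a^pe^{-a}$, and $(b)\Leftrightarrow(c)$ by the count $N_n=2^n\sum_{\ell_j<2^{-n}}\ell_j+T_n$ with $0\le T_n\le 2\,\#\{j:\ell_j\ge 2^{-n}\}$, whose main term gives $\sum_j\ell_j k_j^p$ and whose nonnegative error is $\lesssim\sum_j\ell_j k_j^{p-1}$, dominated by the main term since $k_j\ge 1$ for all but finitely many $j$. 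Two cosmetic caveats. First, if $\dist(\xi,E)$ is read as Euclidean distance it is only comparable, not equal, to the arclength distance to the nearer endpoint of $I_j$, so your identity for $\int_{I_j}|\log\dist(\xi,E)|^p\,dm$ should be a two-sided estimate up to constants; this changes nothing since only $|\log|$ of comparable quantities is involved. Second, for $0<p<1$ the generation-zero dyadic arc contributes the meaningless term $1\cdot|\log 1|^{p-1}$ to the sum in $(c)$, so that sum must be understood over dyadic arcs of generation at least one (or of length below a fixed threshold); this is a convention in the statement of the lemma, not a gap in your argument.
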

	
	We are now ready to prove Theorem \ref{appl}.

	\textbf{Proof of Theorem \ref{appl}.}
	\\ $(1)\Rightarrow (2)$ Assume $\mu$ is supported in a $\mathcal{C} (p)$-Beurling--Carleson set $E \subset \mathbb{T}$. Since
	$$
	\left| \frac{S_\mu '(z)}{S_\mu (z)} \right| = \left| \int_{\mathbb{T}}\dfrac{2 \xi}{(\xi-z)^{2}}d\mu(\xi) \right|
	\leq 2\mu(\mathbb{T})\dist(z, E)^{-2},  z \in \mathbb{T} \setminus  E , 
	$$ 
	it suffices to show that the integral
	$$\int_{\mathbb{T}\setminus E}|\log\dist(z,E)|^{p}d m(z),$$ converges. Consider the complementary arcs $\{I_j \}$ of $E$, that is, $\mathbb{T}\setminus E =\cup_{j}I_j$ to write the above integral as
	$$
	\sum_j\int_{I_j} |\log\dist(z,E) |^{p}d m(z).
	$$
	Since there exists a constant $C>0$ such that 
	$$
	\int_{I_j} | \log\dist(z,E) |^{p}d m(z) \leq C |I_j| | \log |I_j| |^p , \quad j=1,2,\ldots ,
	$$
	Lemma \ref{auxili} finishes the proof.
	
	
	$(2)\Rightarrow (3)$ This implication holds for any inner function $f$. Recall that Ahern proved the estimate $|f'(r \xi)|\leq 4|f' (\xi)|$ for any $0 \leq r<1$, any $\xi \in \mathbb{T}$ and any inner function $f$. See Lemma 6.1 of \cite{ahern} or \cite{mashregui}. Then
	$$
	1 - |f (r \xi)| \leq \int_r^1 |f' (s \xi)| ds  \leq 4 |f' (\xi)| (1-r), \quad 0<r<1, \xi \in \mathbb{T} . 
	$$
	Fix $0<c<1$. Using the notation $$I=\int_{\{z\in\mathbb{D}:|f (z)|<c\}}\dfrac{|\log (1-|z|)|^{p-1}}{1-|z|}dA(z), $$ we have that 
	\begin{eqnarray}
		I&=& \int_{0}^{1}\dfrac{r|\log(1-r) |^{p-1}}{1-r}m(\{\xi\in \mathbb{T}:| f (r\xi)|<c\})dr \nonumber \\
		&\leq& \int_{0}^{1}\dfrac{|\log (1-r) |^{p-1}}{1-r}m(\{\xi\in \mathbb{T}:\log|f' ( \xi)|>\log\frac{1-c}{4(1-r)}\})dr. \nonumber 
	\end{eqnarray}
	Using the change of variables $x= - \log (1-r)$, we obtain 
	$$
	I\leq \int_{0}^{\infty} x^{p-1} m(\{ \xi \in \mathbb{T}: \log|f' ( \xi )|>x + \log (1-c) - 2 \log 2 \})dx  
	$$
	which is finite whenever $\log|f'|\in L^{p}(\mathbb{T})$. 
	\\ $(3)\Rightarrow (4)$ This implication is obvious. 
	\\ $(4)\Rightarrow (5)$
	In order to prove this implication we will make use of the heavy-light decomposition from \cite[Section 4.1]{ivriinicolau1} which we now briefly recall. Let $\mu$ be a positive finite Borel singular measure on $\mathbb{T}$ and fix $M>0$. Let $ \{ I_j^{(1)} \}$ be the family of maximal dyadic arcs of $\mathbb{T}$ such that $$\frac{\mu(I_j^{(1)})}{m(I_j^{(1)})}\geq M.$$ In each arc $I_j^{(1)}$, consider the family $\{J_k^{(1)} \}$ of maximal dyadic subarcs of $I_j^{(1)}$ such that $$\frac{\mu(J_k^{(1)})}{m(J_k^{(1)})}\leq \frac{M}{100}.$$ In each $J_k^{(1)}$ we again find the maximal dyadic arcs $\{I_j^{(2)} \}$ contained in $J_k^{(1)}$ such that $$\frac{\mu(I_j^{(2)})}{m(I_j^{(2)})}\geq M.$$ Continuing this construction we are left with two families of arcs $\{I_j^{(l)} \}$ and $\{J_k^{(l)} \}$ which satisfy the following properties:
	\begin{enumerate}
		\item For any $j,l$ we have that
		$$\sum_{k:J_k^{(l)}\subset I_j^{(l)}}m(J_k^{(l)})=m(I_j^{(l)}).$$  
		\item For any $j,l$ we have that
		$$\sum_{k:I_k^{(l+1)}\subset J_j^{(l)}}m(I_k^{(l+1)})\leq \frac{1}{M}\mu(J_j^{(l)})\leq\frac{m(J_j^{(l)})}{100}.$$
		\item The measure $\mu$ is concentrated on
		$$\bigcup_{j,l} \left( \overline{I_j^{(l)} }\setminus \bigcup_{k : J_k^{(l)}\subset I_j^{(l)}} (J_k^{(l)})^{\circ} \right),$$ where $(J_k^{(l)})^{\circ}$ denotes the interior of $J_k^{(l)}$
	\end{enumerate}
	The arcs $J_k^{(l)}$ are called light and $I_j^{(l)}$ are called heavy arcs of the measure $\mu$. Now let us return to the proof of $(4)\Rightarrow (5)$.
	Let $\{I_{j}^{(i)} \}, \{ J_{k}^{(l)} \}$ be the heavy-light decomposition of $\mu$. Then it suffices to show that $E_{j,l}=\overline{I}_{j}^{(l)}\setminus \cup_{k} J_{k}^{(l)}$ is a $\mathcal{C} (p)$-Beurling--Carleson set for any $j,l$. Fix $E_{j,l}$. Let $\mathcal{B} = \mathcal{B} (j,l)$ be the collection of dyadic arcs $I$ of $\mathbb{T}$ with $m(I) < m(I_j^l)$ that meet $E_{j,l}$. Let $I \in \mathcal{B}$ and write $T_{I}=\{z\in\mathbb{D}: z / |z| \in I,\frac{|I|}{2}<1-|z|<|I|\}$. Then by construction, $\mu(I)>M  m(I) / 100$. Hence there exists an absolute constant $C>0$ such that $P [\mu] (z)\geq CM$ for all $z\in T_{I}$. Here $P[\mu]$ denotes the Poisson integral of the measure $\mu$. Since there exists an absolute constant $C_1 >0$ such that 
	$$\int_{T_{I}}\dfrac{|\log (1-|z|)|^{p-1}}{1-|z|}dA(z)\geq C_1 |I| |\log|I| |^{p-1},$$ we get that 
	$$ C_1 \sum_{I \in \mathcal{B}} |I| |\log|I||^{p-1}\leq \int_{\{z\in\mathbb{D}:P [\mu ] (z)\geq CM\}}\dfrac{|\log (1-|z|)|^{p-1}}{1-|z|}dA(z).$$ Choosing an appropriate constant $M$, Lemma \ref{auxili} finishes the proof .\qed
	
	\bibliographystyle{plain}
	\bibliography{Nicolau_Bampouras}
	
\end{document}